\documentclass[10pt]{amsart}

\usepackage[colorlinks]{hyperref}
\usepackage{color,graphicx,shortvrb}
\usepackage[latin 1]{inputenc}

\newtheorem{theorem}{Theorem}[section]

\newtheorem{corollary}[theorem]{Corollary}

\newtheorem{lemma}[theorem]{Lemma}

\newtheorem{proposition}[theorem]{Proposition}


\usepackage[active]{srcltx} 

\usepackage{stackrel}
\usepackage{amssymb}

\def\J#1#2#3{ \left\{ #1,#2,#3 \right\} }

\def\NN{{\mathbb{N}}}
\def\11{\textbf{$1$}}
\def\CC{{\mathbb{C}}}

\newcommand{\w}{\omega}

\usepackage{enumerate}
\usepackage{amsmath}
\usepackage{amssymb}
\usepackage{mathabx}

\begin{document}

\title[One-parameter groups of orthogonality preservers on JB$^*$-algebras]{One-parameter groups of orthogonality preservers on JB$^*$-algebras}

\author[J.J. Garc{\' e}s]{Jorge J. Garc{\' e}s}
\email{j.garces@upm.es}
\address{Departamento de Matem{\' a}tica Aplicada a la Ingenier{\' i}a Industrial, ETSIDI, Universidad Polit{\' e}cnica de Madrid, Madrid, Spain.}

\author[A.M. Peralta]{Antonio M. Peralta}
\email{aperalta@ugr.es}
\address{Departamento de An{\'a}lisis Matem{\'a}tico, Facultad de
Ciencias, Universidad de Granada, 18071 Granada, Spain.}

\subjclass[2010]{Primary 46L05, 17A40, 17C65, 46L70, 47B48, Secondary  46K70, 46L40, 47B47, 47B49, 46B04}

\keywords{JB$^*$-algebra, orthogonality preserver, one-parameter semigroups, one-parameter semigroups of orthogonality preserving operators}

\date{}

\maketitle

\begin{abstract} In a first objective we improve our understanding about surjective and bijective bounded linear operators preserving orthogonality from a JB$^*$-algebra $\mathcal{A}$ into a JB$^*$-triple $E$. Among many other conclusions, it is shown that a bounded linear bijection $T: \mathcal{A}\to E$ is orthogonality preserving if, and only if, it is biorthogonality preserving if, and only if, it preserves zero-triple-products in both directions (i.e., $\{a,b,c\}=0 \Leftrightarrow \{T(a),T(b),T(c)\}=0$). In the second main result we establish a complete characterization of all one-parameter groups of orthogonality preserving operators on a JB$^*$-algebra.
\end{abstract}

\section{Introduction}\label{intro}

Let $T:A\to B$ be a mapping between C$^*$-algebras. We say that $T$ is \emph{orthogonality preserving} if it maps orthogonal elements in $A$ to orthogonal elements in $B$. Along this paper, elements $a,b$ in a C$^*$-algebra $A$ are called orthogonal if $a b^* = b^* a= 0$. If $T(a^*) = T(a)^*$ for all $a\in A$, the mapping $T$ is called symmetric. Bounded linear orthogonality preserving operators between C$^*$-algebras were fully determined by M. Burgos, F.J. Fern{\'a}ndez-Polo, J. Mart{\'i}nez and the authors of this note in \cite{BurFerGarMarPe2008}, previously M. Wolff \cite{Wolff94} had determined the precise form of all symmetric bounded linear orthogonality preserving operators between unital C$^*$-algebras. The just quoted reference also contains a detailed study of uniformly continuous one-parameter groups of one-parameter groups of symmetric orthogonality preserving operators on unital C$^*$-algebras.\smallskip

Let us recall that \emph{one-parameter semigroup} of bounded linear operators on a Banach space $X$ is a correspondence $\mathbb{R}_0^{+} \to B(X),$ $t\mapsto T_t$ satisfying $T_{t+s} = T_{s} T_{t}$ for all $s,t\in \mathbb{R}_0^{+}$ and $T_0 =I$, where $B(X)$ stands for the Banach space of all bounded liner operators on $X$. A one-parameter semigroup $\{T_t: t\in\mathbb{R}_0^+ \}$ is uniformly continuous at the origin, i.e. $\displaystyle \lim_{t\to 0} \|T_t -I\| =0$, if and only if there exists a bounded linear operator $R\in B(X)$ such that $T_t =e^{t R}$ for all $t\in \mathbb{R}_0^+$, and in such a case, $T_t$ extends to a uniformly continuous one-parameter group on $\mathbb{R}$ (compare \cite[Proposition 3.1.1]{BratRob1987}).\smallskip

Theorem 2.6 in \cite{Wolff94} asserts that if $\{T_t: t\in \mathbb{R}_0^{+}\}$ is a uniformly continuous one-parameter semigroup of orthogonality preserving symmetric operators on a unital C$^*$-algebra $A$, then there exists a uniquely determined element $h$ in the center of $A$, and also a uniquely determined uniformly continuous group $\{S_t: t\in \mathbb{R}_0^{+}\}$ of $^*$-automorphisms on $A$ such that $T_t (a) = e^{th} S_t(a)$ for all $a\in A$, $t\in \mathbb{R}_0^+$.\smallskip

In a recent note we determined all uniformly continuous one-parameter semigroups of orthogonality preserving operators on general C$^*$-algebras (see \cite{GarPeUnitCstaralg20}). In the general case the conclusion is technically more complex. Namely, suppose $\{T_t: t\in \mathbb{R}_0^{+}\}$ is a uniformly continuous one-parameter semigroup of orthogonality preserving operators on a C$^*$-algebra $A$, then there exists a uniformly continuous one-parameter semigroup $\{S_t: t\in \mathbb{R}_0^{+}\}$ of surjective linear isometries {\rm(}i.e. triple isomorphisms{\rm)} on $A$ such that the identities $$ h_{t+s} =  h_t r_t^* S_t^{**} (h_s)=T^{**}_t (h_s),\  h_t^* S_t(x)= S_t(x^*)^* h_t, \ h_t S_t(x^*)^* = S_t(x) h_t^*,$$ $$h_t r_t^* S_t(x) = S_t(x) r_t^* h_t, \hbox{ and } T_t(x) = h_t r_t^* S_t(x) = S_t(x) r_t^* h_t,$$  hold for all $s,t\in \mathbb{R},$ $x\in A$, where $h_t= T_t^{**} (1)$ and $r_t$ is the range partial isometry of $h$ in $A^{**}$ (cf. \cite[Theorem 3]{GarPeUnitCstaralg20}). Among the tools employed in this result we decode the structure of all surjective (respectively, bijective) orthogonality preserving bounded linear operators between C$^*$-algebras. In the general case the sets $\{r_t:t\in \mathbb{R}\}$ and  $\{h_t:t\in \mathbb{R}\}$ need not be one-parameter groups. \smallskip

C$^*$-algebras are englobed in the wider class of JB$^*$-algebras, and all these structures are inside the class of complex Banach spaces known as JB$^*$-triples, where the notion of orthogonality makes also sense (see Subsection \ref{subsec:defi} for a detailed review of these notions). Actually, as shown in \cite{BurFerGarMarPe2008} and \cite{BurFerGarPe09}, the setting and terminology of JB$^*$-triples seem to be the appropriate language to describe continuous orthogonality preserving linear maps between C$^*$- and JB$^*$-algebras. The main result in \cite{BurFerGarPe09} describes those continuous linear maps preserving orthogonality from a JB$^*$-algebra into a JB$^*$-triple. The present note contains one of the first studies on uniformly continuous one-parameter semigroups of orthogonality preserving operators on JB$^*$-algebras. Our main goal is to extend to the Jordan setting the conclusions we recently obtained in the case of uniformly continuous one-parameter semigroups of orthogonality preserving operators on general C$^*$-algebras commented above.\smallskip

As it was shown in \cite{BurFerGarMarPe2008,BurFerGarPe09}, the terminology of JB$^*$-triples seems to be the optimal language to understand continuous linear orthogonality preservers between C$^*$-algebras and, more generally, from a JB$^*$-algebra into a JB$^*$-triple. All bounded linear operators preserving orthogonality from a JB$^*$-algebra into a JB$^*$-triple were completely determined in terms of Jordan and triple homomorphisms, up to multiplication by an element in the second dual of the codomain satisfying certain commutativity identities (cf. Theorem \ref{thm characterization of OP JBstar} below). The recent paper \cite{GarPeUnitCstaralg20} shows that in the case of bounded linear orthogonality preserving operators between C$^*$-algebras those being surjective or bijective enjoy additional properties. In Section \ref{sec: surjective OP JBstar} we study surjective and bijective continuous orthogonality preserving maps from a JB$^*$-algebra  into a JB$^*$-triple $E$. We prove that for each bijective bounded linear operator $T$ from a JB$^*$-algebra $\mathcal{A}$ to a JB$^*$-triple $E$, if we set $h=T^{**} (1)\in E^{**}$ and $r$ denotes the range tripotent of $h$ in $E^{**}$. Then the following statements are equivalent:\begin{enumerate}[$(a)$] \item $T$ is orthogonality preserving;
\item The elements $h$ and $r$ belong to $M(E)$ with $h$ invertible and $r$ unitary. Consequently, $E^{**}$ is a JBW$^*$-algebra with respect to the Jordan product and involution defined by $a\circ_r b = \{a,r,b\}$ and $a^{*_r} = \{r,a,r\}$, respectively, and contains $E$ as a JB$^*$-subalgebra. Furthermore, there exists a Jordan $^*$-isomorphism $S: \mathcal{A}\to (E,\circ_{r},*_{r})$ such that $h$ lies in $Z(E^{**},\circ_{r},*_{r})$, and $T(x)=h\circ_{r(h)} S(x) = U_{h^{\frac12}} S(x),$ for every $x\in \mathcal{A}$, where $h^{\frac12}$ denotes the square root of the positive element $h$ in the JB$^*$-algebra $E_2^{**} (r)$;
\item $T$ is biorthogonality preserving;
\item $T$ is orthogonality preserving on $\mathcal{A}_{sa}$;
\item $T$ is orthogonality preserving on $\mathcal{A}^{+}$;
\item $T$ preserves zero-triple-products, i.e. $$\{a,b,c\}=0 \Rightarrow \{T(a),T(b),T(c)\}=0;$$
\item $T$ preserves zero-triple-products in both directions, i.e. $$\{a,b,c\}=0 \Leftrightarrow \{T(a),T(b),T(c)\}=0,$$
\end{enumerate} (see Propositions \ref{p surjective OP preserves multipliers JB} and \ref{p Ortpreserving on Asa JBstar} and Corollary \ref{c Characterization bd OP plus bijective Jordan}). The proofs in the Jordan setting require a complete new argument which is not a mere aesthetical adaptation of those valid for maps between C$^*$-algebras.\smallskip

Section \ref{sec: one-parametri groups OP Jordan} is devoted to the study of one-parameter groups of orthogonality preserving operators on a JB$^*$-algebras. We shall see in this note how the terminology and tools from JB$^*$-triple theory makes more accesible (and perhaps more complete) the study of one-parameter groups.  Firstly, uniformly continuous one-parameter groups of surjective isometries on a general JB$^*$-triple are precisely given by the exponential of real multiples of triple derivations (see Lemma \ref{l -parameter group of iso on a JB*-triple}). The paper culminates with the following characterization of one-parameter groups of orthogonality preserving operators on a general JB$^*$-algebra $\mathcal{A}$: Suppose $\{T_t: t\in \mathbb{R}_0^{+}\}$ is a family of orthogonality preserving bounded linear bijections on $\mathcal{A}$ with $T_0=Id$. For each $t\geq 0$ let $h_t = T_t^{**} (1)$ and let $r_t$ be the range tripotent of $h_t$ in $\mathcal{A}^{**}$. According to what we prove in Section \ref{sec: surjective OP JBstar}, $h_t$ and $r_t$ belong to $M(\mathcal{A})$ with $h_t$ invertible and $r_t$ unitary and there exists a Jordan $^*$-isomorphism $S_t: \mathcal{A} \to (\mathcal{A},\circ_{r_t},*_{r_t})$ such that $h_t$ lies in $Z(\mathcal{A}^{**},\circ_{r_t},*_{r_t})$, and $T_t(x)=h_t\circ_{r_t} S_t(x) = U_{h_t^{\frac12}} S_t(x),$ for every $x\in \mathcal{A}$. We shall show that under these hypotheses the following statements are equivalent:\begin{enumerate}[$(a)$]\item $\{T_t: t\in \mathbb{R}_0^{+}\}$ is a uniformly continuous one-parameter semigroup of orthogonality preserving operators on $\mathcal{A}$;
\item $\{S_t: t\in \mathbb{R}_0^{+}\}$ is a uniformly continuous one-parameter semigroup of surjective linear isometries {\rm(}i.e. triple isomorphisms{\rm)} on $\mathcal{A}$ {\rm(}and hence there exists a triple derivation $\delta$ on $\mathcal{A}$ such that $S_t = e^{t \delta}$ for all $t\in \mathbb{R}${\rm)}, the mapping $t\mapsto h_t $ is continuous at zero, and the identity \begin{equation}\label{eq new idenity in the statement of theorem 1 on one-parameter Jordan} h_{t+s} =  h_t \circ_{r_t} S_t^{**} (h_s)= \{ h_t , {r_t},  S_t^{**} (h_s) \},
    \end{equation} holds for all $s,t\in \mathbb{R}$
\end{enumerate} (see Theorem \ref{t Wolff one-parameter for OP JBstar}).

\subsection{JB$^*$-algebras and JB$^*$-triples}\label{subsec:defi}

A real or complex Jordan algebra is a non-necessarily associative algebra $\mathcal{B}$ over $\mathbb{R}$ or $\mathbb{C}$ whose product (denoted by $\circ$) is commutative and satisfies the so-called \emph{Jordan identity}: $( x \circ y ) \circ x^2 = x\circ ( y\circ x^2 ).$ Given an element $a \in \mathcal{B}$ the symbol $U_a$ will stand for the linear mapping on $\mathcal{B}$ defined by $U_a (b) := 2(a\circ b)\circ a - a^2\circ b$. A Jordan Banach algebra $\mathcal{B}$ is a Jordan algebra equipped with a complete norm satisfying $\|a\circ b\|\leq \|a\| \cdot \|b\|$ for all $a,b\in \mathcal{B}$. A JB$^*$-algebra is a complex Jordan Banach algebra $\mathcal{B}$ equipped with an algebra involution $^*$ satisfying  $\|U_a ({a^*}) \|= \|a\|^3$, $a\in \mathcal{B}$. 
A \emph{JB-algebra} is a real Jordan algebra $J$ equipped with a complete norm satisfying \begin{equation}\label{eq axioms of JB-algebras} \|a^{2}\|=\|a\|^{2}, \hbox{ and } \|a^{2}\|\leq \|a^{2}+b^{2}\|\ \hbox{ for all } a,b\in J.
\end{equation} A celebrated result due to J.D.M. Wright shows that every JB-algebra $J$ corresponds uniquely to the self-adjoint part $\mathcal{B}_{sa}=\{x\in B : x^* =x\}$ of a JB$^*$-algebra $\mathcal{B}$ \cite{Wright77}.\smallskip

Every C$^*$-algebra is a JB$^*$-algebra with respect to its original norm and involution and the natural Jordan product given by $a \circ b := \frac12 (a b + ba)$. A JB$^*$-algebra is said to be a \emph{JC$^*$-algebra} if it is a JB$^*$-subalgebra of some C$^*$-algebra.\smallskip

As seen in many previous references, the notion of orthogonality and the study of orthogonality preservers is easier when these structures are regarded as elements in the class of JB$^*$-triples. There are undoubted motivations coming from holomorphic theory on general complex Banach spaces to be attracted by the notion of JB$^*$-triple (see, for example, \cite{Harris74,Ka}). As introduced by W. Kaup in \cite{Ka} a JB$^*$-triple is a complex Banach space $E$ admitting a continuous triple product $\J ... : E\times
E\times E \to E,$ which is conjugate linear in the central variable and symmetric and bilinear in the outer variables satisfying:
\begin{enumerate}[{\rm (a)}] \item (Jordan identity) $$L(a,b) L(x,y) = L(x,y) L(a,b)
+ L(L(a,b)x,y)  - L(x,L(b,a)y),$$ for all $a,b,x,y\in E$, where $L(a,b)$ is the linear operator on
$E$ defined by $L(a,b) x = \J abx;$
\item For each $a\in E$, $L(a,a)$ is a hermitian operator with non-negative spectrum;
\item $\|\{a,a,a\}\| = \|a\|^3$ for all $a\in E$.
\end{enumerate}

The triple product \begin{equation}\label{eq Cstar triple product} \J xyz := \frac{1}2 (xy^*z + zy^*x),\ \ \ \ \ \ \ \  (x,y,z\in A).
\end{equation} can be indistinctly applied to induce a structure of JB$^*$-triple on every C$^*$-algebra and on the space $B(H,K)$ of all bounded linear operators between two complex Hilbert spaces $H$ and $K$ (see \cite[Fact 4.1.41]{Cabrera-Rodriguez-vol1}). Furthermore, each JB$^*$-algebra is a JB$^*$-triple with the same norm and triple product \begin{equation}\label{eq triple product JBstar algebra} \J xyz = (x\circ y^*) \circ z + (z\circ y^*)\circ x - (x\circ z)\circ y^*\end{equation} (see \cite[Theorem 4.1.45]{Cabrera-Rodriguez-vol1}). By a JB$^*$-subtriple of a JB$^*$-triple $E$ we mean a norm closed subspace of $E$ which is also closed for the triple product.\smallskip

A JBW$^*$-triple is a JB$^*$-triple which is also a dual Banach space. The bidual of every JB$^*$-triple is a JBW$^*$-triple \cite{Di86}. The alter ego of Sakai's theorem asserts that every JBW$^*$-triple admits a unique (isometric) predual and its product is separately weak$^*$ continuous \cite{BaTi} (see also \cite[Theorems 5.7.20 and 5.7.38]{Cabrera-Rodriguez-vol2}).\smallskip

A \emph{triple homomorphism} between JB$^*$-triples $E$ and $F$ is a linear mapping $T:E\to F$ satisfying $T\{a,b,c\} = \{T(a),T(b), T(c)\}$, for all $a,b,c\in E$.\smallskip

An element $e$ in a JB$^*$-triple $E$ is called a \emph{tripotent} if $\{e,e,e\} =e$. Each tripotent $e\in E$ induces a \emph{Peirce decomposition} of $E$ in the form $$E= E_{2} (e) \oplus E_{1} (e) \oplus E_0 (e),$$ where for
$j=0,1,2,$ $E_j (e)$ is the $\frac{j}{2}$ eigenspace of the operator $L(e,e)$.
For $j\in \{0,1,2\}$ the corresponding \emph{Peirce $j$-projection} of $E$ onto $E_j (e)$ is denoted by $P_{j} (e)$. The Peirce 2-subspace $E_2 (e)$ is a JB$^*$-algebra with Jordan product $x\circ_e y := \J xey$ and involution $x^{*_e} := \J
exe$ (cf. \cite[\S 4.2.2, Fact 4.2.14 and Corollary 4.2.30]{Cabrera-Rodriguez-vol1}). The reader should be warned that in \cite{Cabrera-Rodriguez-vol1} the Peirce subspaces $E_0(e)$, $E_1(e)$ and $E_2(e)$ are denoted by $E_0(e)$, $E_{\frac12}(e)$ and $E_1(e)$, respectively.\smallskip

Suppose $0\neq a$ is an element in a JB$^*$-triple $E$. We set $a^{[1]}= a$, $a^{[3]} = \J aaa$, and $a^{[2n+1]} := \J aa{a^{[2n-1]}},$ $(n\in \NN)$. The norm closure of the linear span of the odd powers $a^{[2n+1]}$ defines a JB$^*$-subtriple of $E$ which coincides with the JB$^*$-subtriple generated by $a,$ and will be denoted by $E_a$. There exists a triple Gelfand theory for single generated JB$^*$-subtriples assuring that $E_a$ is JB$^*$-triple isomorphic (and hence isometric) to some $C_0 (\Omega_{a})$ for some (unique) compact Hausdorff space $\Omega_{a}$ contained in the set $[0,\|a\|],$ such that $0$ cannot be an isolated point in $\Omega_a$. Here and throughout the paper, the symbol $C_0 (\Omega_{a})$ will stand for the Banach space of all complex-valued continuous functions on $\Omega_a$ vanishing at $0.$ We can further find a triple isomorphism $\Psi : E_a \to C_{0}(\Omega_a)$ satisfying $\Psi (a) (t) = t$ $(t\in \Omega_a)$ (cf. \cite[Corollary 4.8]{Ka0}, \cite[Corollary 1.15]{Ka} and \cite{FriRu85} or \cite[Lemma 3.2, Corollary 3.4 and Proposition 3.5]{Ka96}). The set $\Omega_a$ is called \emph{the triple spectrum} of $a$ (in $E$), and it does not change when computed with respect to any JB$^*$-subtriple $F\subseteq E$ containing $a$ \cite[Proposition 3.5]{Ka96}. As in \cite{Ka96} for $a=0$ we set $\Omega_a =\emptyset$.\smallskip

Let us recall some properties of the \emph{triple functional calculus}. Given an element $a$ in JB$^*$-triple $E$, let $\Omega_a$ and $\Psi : E_a \to C_{0}(\Omega_a)$ denote the triple spectrum of $a$ and the triple isomorphism given in previous paragraphs. For each continuous function $f\in C_{0}(\Omega_a)$, $f_t(a)$ will denote the unique element in $E_a$ such that $\Psi (f_t(a)) = f$. The element $f_t(a)$ will be called the \emph{continuous triple functional calculus} of $f$ at $a$. For example, the function $g(t) =\sqrt[3]{t}$ produces $g_t(a) = a^{[\frac{1}{3}]}$. 
\smallskip

Among the consequences of this local triple Gelfand theory, for each $a$ in a JB$^*$-triple $E$, there exists a unique \emph{cubic root} of $a$ in $E_a$, i.e. and element $a^{[\frac13 ]}\in E_a$ satisfying \begin{equation}\label{eq existence of cubic roots} \J {a^{[\frac13 ]}}{a^{[\frac13 ]}}{a^{[\frac13 ]}}=a.
\end{equation} The sequence $(a^{[\frac{1}{3^n}]})_n$ can be recursively defined by $a^{[\frac{1}{3^{n+1}}]} = \left(a^{[\frac{1}{3^{n}}]}\right)^{[\frac 13]}$, $n\in \NN$. The sequence $(a^{[\frac{1}{3^n}]})_n$ converges in the weak$^*$-topology of $E^{**}$ to a (unique) tripotent denoted by $r(a)$. The tripotent $r(a)$ is named the \emph{range tripotent} of $a$ in $E^{**}$. Alternatively, $r(a)$ is the smallest tripotent $e\in E^{**}$ satisfying that $a$ is positive in the JBW$^*$-algebra $E^{**}_{2} (e)$ (compare \cite[Lemma 3.3]{EdRu96}).\smallskip

The relation ``being orthogonal'' extends naturally from the C$^*$-setting to the setting of JB$^*$-triples, and thus to JB$^*$-algebras. Elements $a,b$ in a JB$^*$-triple $E$ are \emph{orthogonal} (denoted by $a \perp b$) if $L(a,b) = 0$. Lemma 1 in \cite{BurFerGarMarPe2008} contains several reformulations of the relation ``being orthogonal'' which will be applied without any explicit mention. It should be also noted that elements $a,b$ in a C$^*$-algebra $A$ are orthogonal in the C$^*$- sense if and only if they are orthogonal when $A$ is regarded as a JB$^*$-triple.\smallskip

Let $\mathcal{B}$ be a JB$^*$-algebra and $I\subseteq \mathcal{B}$ a (norm closed) subspace of $\mathcal{B}$. We shall say that $I$ is a (closed Jordan) ideal of $\mathcal{B}$ if $I\circ \mathcal{B} \subseteq I$. A (closed) \emph{triple ideal} or simply an \emph{ideal} of a JB$^*$-triple $E$ is a (norm closed) subspace $I\subseteq E$ satisfying $\{E,E,I\}+\{E,I,E\}\subseteq I$, equivalently, $\{E,E,I\}\subseteq I$ or $\{E,I,E\}\subseteq I$ or $\{E,I,I\}\subseteq I$ (see \cite[Proposition 1.3]{BuChu92}).\smallskip

We refer to \cite{Cabrera-Rodriguez-vol1,HOS} for the basic background on JB$^*$-triples and JB$^*$-algebras.

\section{Orthogonality preserving linear surjections from a JB$^*$-algebra}\label{sec: surjective OP JBstar}

A (linear) mapping $T$ between JB$^*$-triples is \emph{orthogonality preserving} (respectively, \emph{biorthogonality preserving}) if $T(a) \perp T(b)$ for every $a\perp b$ in the domain (respectively, $T(a)\perp T(b) \Leftrightarrow a\perp b$).\smallskip

For each element $a$ in a JB$^*$-algebra $\mathcal{B}$ the (Jordan) multiplication operator by $a$, $M_a,$ is defined by $M_a (x) = a\circ x$ ($x\in \mathcal{B}$). Elements $a$ and $b$ in $\mathcal{B}$ are said to \emph{operator commute} in $\mathcal{B}$ if the multiplication operators $M_a$ and $M_b$ commute in $B(\mathcal{B})$, i.e., $$(a\circ x) \circ b = a\circ (x\circ b), \hbox{ for all $x$ in $\mathcal{B}$.}$$ The center of $\mathcal{B}$, $Z(\mathcal{B})$, is the set of all elements $z$ in $\mathcal{B}$ such that $z$ and $b$ operator commute for every $b$ in $\mathcal{B}$.\smallskip

Let us assume that $h$ and $k$ are two self-adjoint elements in a JB$^*$-algebra $\mathcal{B}$. It is known that $h$ and $k$ generate a JB$^*$-subalgebra of $\mathcal{B}$ which can be realised as a JC$^*$-subalgebra of some $B(H)$ (cf. \cite[Corollary 2.2]{Wright77} and Macdonald's and Shirshov-Cohn's theorems \cite[Theorems 2.4.13 and 2.4.14]{HOS}), and under this identification, $h$ and $k$ commute as element of $B(H)$ in the usual sense whenever they operator commute in $\mathcal{B}$ (cf. \cite[Proposition 1]{Topping}).  Similarly, it can be shown that $h$ and $k$ operator commute in $\mathcal{B}$ if and only if the following identity holds \begin{equation}\label{eq ideintity for operator commutativity of two hermitian elements}\hbox{ $h^2 \circ k =\J hkh$ (equivalently, $h^2 \circ k = 2 (h\circ k)\circ h - h^2 \circ k$).}
\end{equation}

It is known that each closed Jordan ideal $I$ of a JB$^*$-algebra $\mathcal{B}$ is self-adjoint --i.e. $I^* = I$-- (see \cite[Theorem 17]{Young78} or \cite[Proposition 3.4.13]{Cabrera-Rodriguez-vol1}). Therefore, every Jordan ideal of $\mathcal{B}$ is a triple ideal. A norm closed subspace $I$ of a JB$^*$-algebra $\mathcal{B}$ is a triple/Jordan ideal if and only if it is the same kind of substructure in the unitization of $\mathcal{B}$. So we can assume that $\mathcal{B}$ is unital. If $I$ is a triple ideal of $\mathcal{B}$ the identities $\{ A, 1, I \}  =  A \circ I$   and $\{ 1, I, 1\}  =  \{ x^*  :  x  \in  I \}= I^*$ show that $I$ is a self-adjoint Jordan ideal of $\mathcal{B}$.\label{page Jordan and triple ideals coincide}\smallskip

Let $\mathcal{B}$ be a JB$^*$-algebra. The \emph{(Jordan) multipliers algebra} of $\mathcal{B}$ is defined as $$M_{J}( \mathcal{B}):=\{x\in \mathcal{B}^{**}: x\circ  \mathcal{B} \subseteq  \mathcal{B}\}$$ (cf. \cite{Ed80}). The space $M_{J}(\mathcal{B})$ is a unital JB$^*$-subalgebra of $\mathcal{B}^{**}$. Moreover, $M_{J}(\mathcal{B})$ is the (Jordan) idealizer of $\mathcal{B}$ in $\mathcal{B}^{**}$, that is, the largest JB$^*$-subalgebra of $\mathcal{B}^{**}$ which contains $\mathcal{B}$ as a closed Jordan ideal. If we realize $ \mathcal{B}$ as a JB$^*$-triple we can also consider its \emph{triple multipliers} in $\mathcal{B}^{**}$ given by $$M (\mathcal{B}):=\{ x\in \mathcal{B}^{**}: \{x, \mathcal{B}, \mathcal{B}\}\subseteq  \mathcal{B}\}$$ as defined in \cite{BuChu92}. In this case $M(\mathcal{B})$ is the largest JB$^*$-subtriple of $\mathcal{B}^{**}$ which contains $\mathcal{B}$ as a closed triple ideal (see \cite[Theorem 2.1]{BuChu92}). Let us observe that these two notions do not cause any source of conflict because the triple multipliers of $\mathcal{B}$ clearly is a JB$^*$-subalgebra of $\mathcal{B}^{**}$ since it contains the unit of $\mathcal{B}^{**},$ and since Jordan and triple ideals of a JB$^*$-algebra are the same (cf. the comments in page \pageref{page Jordan and triple ideals coincide}) both notions above coincide.\smallskip

The determination of all bounded linear operators preserving orthogonality between JB$^*$-triples remains as an open problem. However, as shown in \cite{BurFerGarPe09}, the description is affordable if the domain is a JB$^*$-algebra.\smallskip

\begin{theorem}\label{thm characterization of OP JBstar}\cite[Theorem 4.1 and Corollary 4.2]{BurFerGarPe09} Let $T:\mathcal{A}\to E$ be a bounded linear operator from a JB$^*$-algebra to a JB$^*$-triple, let $h=T^{**}(1)$ and let $r=r(h)$ denote the range tripotent of $h$ in $E^{**}$. The following assertions are equivalent:
 \begin{enumerate}[$a)$]
     \item $T$ is orthogonality preserving;
     \item There exists a unital Jordan $^*$-homomorphism $S:M(\mathcal{A})\to E_2^{**}(r)$ such that $S(x)$ and $h$ operator commute in the JB$^*$-algebra $E_2^{**}(r)$ and \begin{equation}\label{eq fund equation conts OP JBSTAR}
         T(x)=h\circ_{r} S(x)= \{h,r,S(x)\}= U_{h^{\frac12}} (S(x)),
     \end{equation} for every $x\in M(\mathcal{A})$ where $h^{\frac12}$ is the square root of the positive element $h$ in the JB$^*$-algebra $E_2^{**} (r)$ and the $U$ operator is the one given by this JB$^*$-algebra {\rm(}in particular $T(\mathcal{A})\subseteq E_2^{**}(r)${\rm)};
     \item $T$ preserves zero-triple-products.
\end{enumerate}
\end{theorem}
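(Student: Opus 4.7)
The plan is to prove the two implications separately, leveraging the structure theorem from Corollary~\ref{c Characterization bd OP plus bijective Jordan}: for every bijective bounded orthogonality preserver $T:\mathcal{A}\to\mathcal{A}$, the element $h=T^{**}(1)$ is invertible in the twisted algebra $(\mathcal{A}^{**},\circ_{r(h)},*_{r(h)})$, and there is a \emph{unique} Jordan $^*$-isomorphism $S$ with $T(x)=U_{h^{\frac12}}S(x)=\{h,r(h),S(x)\}$. This uniqueness will be the pivotal tool on both sides.

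For $(a)\Rightarrow(b)$, continuity of $t\mapsto h_t$ at zero is immediate from $h_t-1=(T_t-I)^{**}(1)$. Applying the semigroup identity $T_{t+s}^{**}=T_t^{**}T_s^{**}$ to $1\in\mathcal{A}^{**}$ gives $h_{t+s}=T_t^{**}(h_s)=\{h_t,r_t,S_t^{**}(h_s)\}$, which is the required identity. For $\{S_t\}$, I invert the factorization: since $h_t$ is invertible in $(\mathcal{A}^{**},\circ_{r_t})$ with $r_t$ unitary, the operator $U^{(t)}_{h_t^{\frac12}}$ (computed in the twisted algebra) is invertible with inverse $U^{(t)}_{h_t^{-\frac12}}$, so $S_t^{**}=U^{(t)}_{h_t^{-\frac12}}T_t^{**}$. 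Uniform continuity of $\{S_t\}$ at zero follows from $h_t\to 1$ (hence $h_t^{-\frac12}\to 1$) combined with $T_t\to I$ uniformly. The semigroup law $S_{t+s}=S_tS_s$ is obtained from the uniqueness part of the factorization: I verify that $S_t\circ S_s$ is a Jordan $^*$-isomorphism into $(\mathcal{A},\circ_{r_{t+s}},*_{r_{t+s}})$ providing the decomposition $T_{t+s}=U^{(t+s)}_{h_{t+s}^{\frac12}}(S_tS_s)$, and uniqueness forces $S_{t+s}=S_tS_s$. Finally, Lemma~\ref{l -parameter group of iso on a JB*-triple} extends $\{S_t\}$ to a uniformly continuous one-parameter group and produces the generating triple derivation $\delta$ with $S_t=e^{t\delta}$, and the standard extension via Proposition 3.1.1 of \cite{BratRob1987} promotes the identity to all $s,t\in\mathbb{R}$.

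For $(b)\Rightarrow(a)$, each $T_t$ is orthogonality preserving by hypothesis, leaving the semigroup law $T_{t+s}=T_tT_s$ and uniform continuity at zero. Using $T_t(x)=\{h_t,r_t,S_t(x)\}$ and the triple homomorphism property of $S_t$ (every Jordan $^*$-isomorphism is a triple automorphism),
\begin{equation*}
T_tT_s(x)=\{h_t,r_t,S_t^{**}(\{h_s,r_s,S_s(x)\})\}=\{h_t,r_t,\{S_t^{**}(h_s),S_t^{**}(r_s),S_tS_s(x)\}\}.
\end{equation*}
Exploiting the operator commutativity of $h_t$ with $S_t^{**}(h_s)$ (which follows from $h_t\in Z(\mathcal{A}^{**},\circ_{r_t})$) together with the main identity $h_{t+s}=\{h_t,r_t,S_t^{**}(h_s)\}$ and Peirce calculus expressing $r_{t+s}$ through $r_t$ and $S_t^{**}(r_s)$, this nested expression collapses to $\{h_{t+s},r_{t+s},S_{t+s}(x)\}=T_{t+s}(x)$. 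Uniform continuity at zero is then direct from $h_t\to 1$, $S_t\to I$, and the factorization.

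The main obstacle is the collapse just sketched: rewriting $\{h_t,r_t,\{S_t^{**}(h_s),S_t^{**}(r_s),S_tS_s(x)\}\}$ as $\{h_{t+s},r_{t+s},S_{t+s}(x)\}$ requires careful bookkeeping of the range tripotent $r_{t+s}$ in terms of $r_t$ and $S_t^{**}(r_s)$, together with the fundamental identities for $U$-operators under twisting by the unitary $r_t$. The mirror-image difficulty in $(a)\Rightarrow(b)$ is confirming that $S_t\circ S_s$ genuinely takes values in the $r_{t+s}$-twisted algebra (not merely in the $r_t$-twisted one), which reduces to the same compatibility of tripotents and of the involutions $*_{r_t}$ and $*_{r_{t+s}}$.
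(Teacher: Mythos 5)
Your proposal does not address the statement you were asked to prove. Theorem~\ref{thm characterization of OP JBstar} concerns a \emph{single} bounded linear operator $T:\mathcal{A}\to E$ and asserts the equivalence of three conditions: $T$ is orthogonality preserving; $T$ factors as $T(x)=h\circ_r S(x)=U_{h^{1/2}}(S(x))$ through a unital Jordan $^*$-homomorphism $S:M(\mathcal{A})\to E_2^{**}(r)$ whose range operator commutes with $h$; and $T$ preserves zero-triple-products. What you have written is instead a proof sketch of the one-parameter semigroup characterization (Theorem~\ref{t Wolff one-parameter for OP JBstar}): you discuss the family $\{T_t\}$, the continuity of $t\mapsto h_t$ at zero, the semigroup law $S_{t+s}=S_tS_s$, the generating triple derivation $\delta$ with $S_t=e^{t\delta}$, and the identity $h_{t+s}=\{h_t,r_t,S_t^{**}(h_s)\}$. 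None of these objects appears in the statement at hand, and conversely none of the three equivalences $a)\Leftrightarrow b)\Leftrightarrow c)$ of the theorem is established anywhere in your argument. In the paper itself this theorem carries no proof: it is imported verbatim from \cite[Theorem 4.1 and Corollary 4.2]{BurFerGarPe09}, and a genuine proof would require the machinery of that reference (the analysis of orthogonal bilinear forms and the local Gelfand theory for the subtriple generated by $h$), not the one-parameter apparatus of Section~\ref{sec: one-parametri groups OP Jordan}.

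There is also a structural circularity that would invalidate the approach even if the target statement had been the semigroup theorem restricted to a single operator. You propose to ``leverage the structure theorem from Corollary~\ref{c Characterization bd OP plus bijective Jordan}'' as the pivotal tool. But that corollary is obtained in the paper as a consequence of Theorem~\ref{thm characterization of OP JBstar} together with Propositions~\ref{p surjective OP preserves multipliers JB} and~\ref{p Ortpreserving on Asa JBstar}; it sits strictly downstream of the statement you are supposed to prove. Invoking it here assumes the conclusion. A correct treatment must either reproduce the argument of \cite{BurFerGarPe09} from first principles or explicitly acknowledge the result as a quoted external theorem, which is what the paper does.
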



Let $\mathcal{A}$ be a JB$^*$-algebra whose positive part will be denoted by $\mathcal{A}^+$. Fix $a\in \mathcal{A}^+$. The JB$^*$-subalgebra of $\mathcal{A}$ generated by $a$, coincides with the JB$^*$-subtriple of $\mathcal{A}$ generated by $a,$ and will be denoted by the same symbol $\mathcal{A}_a$.\label{page ref subalgebra and subtriple generaed coincide} To see this we shall simply observe that the range tripotent of $a$ in $\mathcal{A}^{**}$ lies in the triple multipliers of the JB$^*$-subtriple generated by $a$, and thus $a^2=\{a,r(a),a\}\in \mathcal{A}_a$. It is well known that $\mathcal{A}_a$ is isometrically isomorphic to an abelian C$^*$-algebra (cf. \cite[3.2.3]{HOS}). Moreover, the JB$^*$-subalgebra of $\mathcal{A}$ generated by a self-adjoint element $b$ is isometrically JB$^*$-isomorphic to a commutative C$^*$-algebra (cf. \cite[The spectral theorem 3.2.4]{HOS}). We can define in this way a continuous functional calculus at the element $b$.

\begin{lemma}\label{l AkPed for JBstar} Let $I$ be a closed Jordan ideal of a JB$^*$-algebra $\mathcal{A}$. Suppose  $x+I , y +I $ are two positive orthogonal elements in $\mathcal{A}/I$. Then there exist $a,b\in I^{+}$ satisfying $(x-a)\perp (y-b)$.
\end{lemma}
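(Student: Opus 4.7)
My approach adapts the classical Akemann--Pedersen lifting trick to JB$^*$-algebras. First, since the quotient map $\pi:\mathcal{A}\to\mathcal{A}/I$ is a surjective Jordan $^*$-homomorphism of JB$^*$-algebras, it carries $\mathcal{A}^+$ onto $(\mathcal{A}/I)^+$; I would replace $x,y$ with positive lifts (absorbing the $I$-valued corrections into $a,b$ afterwards) so that $x,y\in\mathcal{A}^+$ with $x\circ y\in I$.

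Next, set $u := x-y\in\mathcal{A}_{sa}$ and work in the abelian JB$^*$-subalgebra $\mathcal{A}_u\cong C_0(\Omega_u)$ (from the local Gelfand theory recalled in the introduction). Continuous functional calculus there yields $u^+,u^-\in\mathcal{A}^+$ with $u^+-u^- = u$ and $u^+\circ u^- = 0$, so that $u^+\perp u^-$ in the JB$^*$-triple sense. Since Jordan $^*$-homomorphisms commute with continuous functional calculus on self-adjoint elements, $\pi(u^{\pm}) = (\pi(u))^{\pm}$; and inside the commutative JB$^*$-subalgebra of $\mathcal{A}/I$ containing the orthogonal positives $x+I$ and $y+I$, the standard $C_0$-calculation gives $((x+I)-(y+I))^+ = x+I$ and $((x+I)-(y+I))^- = y+I$. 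Hence $x-u^+$ and $y-u^-$ both lie in $I$.

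Setting $a := x-u^+$ and $b := y-u^-$, a direct calculation using $u^+-u^- = x-y$ shows $a = b = \tfrac12(x+y-|u|)$, and by construction $x-a = u^+\perp u^- = y-b$, yielding the desired orthogonality. The main obstacle I expect is the positivity $a\in I^+$, i.e.\ $a\ge 0$: this amounts to $x+y\ge|x-y|$, which via the Jordan identity $(x+y)^2-(x-y)^2 = 4(x\circ y)$ and operator monotonicity of the square root on a JB$^*$-algebra's positive cone is equivalent to $x\circ y\ge 0$. Since $x\circ y$ lies in $I_{sa}$ but is not automatically positive in $\mathcal{A}$, I would finish by further adjusting the lifts by positive elements of $I$, using the bidual decomposition $\mathcal{A}^{**} = I^{**}\oplus(\mathcal{A}/I)^{**}$ (given by the central projection $p = 1_{I^{**}}$), so as to arrange $x\circ y\ge 0$ and then invoke the construction above to produce the required $a,b\in I^+$.
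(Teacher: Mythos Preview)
Your core argument---lift $x,y$ to $\mathcal A^+$, set $u=x-y$, take $u^{\pm}$ by functional calculus, and put $a=x-u^+$, $b=y-u^-$---is exactly the paper's proof (which in turn follows Akemann--Pedersen verbatim). So up through the construction of $a$ and $b$ you match the paper.

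The positivity obstacle you flag in your last paragraph is real, and the paper's proof does \emph{not} address it either: it simply sets $a=x-x_1$, $b=y-y_1$ and stops. In fact the element $a=b=\tfrac12(x+y-|x-y|)$ need not lie in $I^+$. For instance, in $\mathcal A=C([0,1],M_2)$ with $I=\{f:f(0)=0\}$, the positive functions
\[
x(t)=\begin{pmatrix}1&0\\0&t^2\end{pmatrix},\qquad
y(t)=\begin{pmatrix}t^2&t\\t&1\end{pmatrix}
\]
have orthogonal images in $\mathcal A/I\cong M_2$, yet a direct computation gives that $\tfrac12(x+y-|x-y|)$ has a strictly negative eigenvalue for every $t\in(0,1)$. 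So the ``$a,b\in I^+$'' in the statement is an overclaim; what the construction yields (in both your argument and the paper's) is $a,b\in I_{sa}$ with $(x-a)\perp(y-b)$, and this is all that is used in the lemma's sole application (the proof that $\widehat T$ preserves orthogonality).

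Your proposed repair---adjust the lifts via the central decomposition $\mathcal A^{**}=I^{**}\oplus(\mathcal A/I)^{**}$ so as to force $x\circ y\ge 0$---is not convincing as stated: the central projection $p=1_{I^{**}}$ lives in $\mathcal A^{**}$, so $px$, $(1-p)x$ need not lie in $\mathcal A$, and it is not clear one can perturb $x,y$ \emph{inside} $\mathcal A^+$ by elements of $I^+$ to achieve $x\circ y\ge 0$. I would drop that paragraph and simply note that the argument produces $a,b\in I$ (indeed $I_{sa}$), which suffices for the intended use.
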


\begin{proof} First observe that we can assume that $x,y$ are positive elements in $\mathcal{A}.$ Indeed, pick $a+I\in (\mathcal{A}/I)_{sa}$ (with $a\in \mathcal{A}_{sa}$) such that $x+I=(a+I)^2=a^2+I,$ and the same for $y$.\smallskip

Thought the rest of the proof literally follows the arguments in \cite[Proposition 2.3]{AkPed77}, an sketch of the ideas is included here for the sake of completeness. Define, via continuous functional calculus, $x_1=(x-y)_{+}$ and $y_1=(x-y)_{-}.$ By construction $x_1,y_1\geq 0$ and $x_1 y_1= 0$. Since $(x+I),(y+I)\geq 0$ with $(x+I) (y+I) =0$, we deduce that $((x+I)-(y+I))_{+}=x+I$, and hence $$ x_1+I=(x-y)_{+}+I=((x+I)-(y+I))_{+}=x+I.$$ Similarly we have $y_1+I=y+I.$ Setting $a=x-x_1$ and $b=y-y_1$ we get the desired conclusion.
\end{proof}

As it was pointed out in \cite[Section 2]{GarPeUnitCstaralg20}, Goldstein's charactersiation of orthogonal bilinear forms in \cite{Gold} turned out to be a very useful tool in the study of orthogonality preserving operators. The Jordan version of this result has been studied in \cite{JamPeSidd2015}. More concretely, let $V:\mathcal{A}\times \mathcal{A}\to \CC$ be a bilinear form on a JB$^*$-algebra $\mathcal{A}.$ Following \cite{JamPeSidd2015} we say that $V$ is orthogonal (respectively, orthogonal on $\mathcal{A}_{sa}$) if $V(a,b^*)=0$ for every $a,b$ in $\mathcal{A}$ (respectively, in $\mathcal{A}_{sa}$) with $a\perp b.$ Corollary 3.14 (Propositions 3.8 and 3.9) in \cite{JamPeSidd2015} shows that a bilinear form on a JB$^*$-algebra $\mathcal{A}$ is orthogonal if and only if $V$ is orthogonal on $\mathcal{A}_{sa}.$ Actually, it is not hard to see that the latter is equivalent to $V$ being orthogonal on $\mathcal{A}^+$. This result enables us to obtain an appropriate Jordan version of \cite[Proposition 1]{GarPeUnitCstaralg20}.

\begin{proposition}\label{p Ortpreserving on Asa JBstar}
Let $T:\mathcal{A}\to E$ be a bounded linear operator from a JB$^*$-algebra to a JB$^*$-triple. The following statements are equivalent:
\begin{enumerate}[$(a)$]
    \item $T$ preserves orthogonality;
    \item $T$ preserves orthogonality on $\mathcal{A}_{sa}$;
    \item $T$ preserves orthogonality on $\mathcal{A}^{+}.$
\end{enumerate}
\end{proposition}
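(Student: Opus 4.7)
The implications $(a)\Rightarrow (b)\Rightarrow (c)$ are immediate from the inclusions $\mathcal{A}^+\subseteq \mathcal{A}_{sa}\subseteq \mathcal{A}$, so the task is to establish $(c)\Rightarrow (a)$. My plan is to translate the orthogonality condition on $T(a)$ and $T(b)$ into the vanishing of a suitable family of bilinear forms on $\mathcal{A}$ and then invoke the Jordan version of Goldstein's theorem recalled in the paragraph preceding the proposition.

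Concretely, for each $\phi\in E^*$ and each $x\in E$, I would consider
\[ V_{\phi,x}(a,b) := \phi(\{T(a),T(b^*),x\}), \qquad a,b\in\mathcal{A}.\]
Because $b\mapsto b^*$ is conjugate-linear in $\mathcal{A}$ and the triple product is conjugate-linear in its middle variable, $V_{\phi,x}$ is a bounded $\mathbb{C}$-bilinear form. Under hypothesis $(c)$, whenever $a,b\in\mathcal{A}^+$ satisfy $a\perp b$, we have $b^*=b$ and $T(a)\perp T(b)$, so $L(T(a),T(b))=0$, hence $\{T(a),T(b),x\}=0$ and thus $V_{\phi,x}(a,b)=0$. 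In other words, $V_{\phi,x}$ is orthogonal on $\mathcal{A}^+$ in the sense of \cite{JamPeSidd2015}.

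By the equivalences from \cite{JamPeSidd2015} quoted above (orthogonality of a bilinear form on $\mathcal{A}^+$, on $\mathcal{A}_{sa}$, and on all of $\mathcal{A}$ are equivalent), $V_{\phi,x}$ is orthogonal on $\mathcal{A}$. So for arbitrary $a,b\in\mathcal{A}$ with $a\perp b$ we obtain
\[ \phi(\{T(a),T(b),x\}) = V_{\phi,x}(a,b^*) = 0,\]
valid for every $\phi\in E^*$ and every $x\in E$. A straightforward Hahn-Banach argument then yields $\{T(a),T(b),x\}=0$ for all $x\in E$, that is, $L(T(a),T(b))=0$, which is exactly $T(a)\perp T(b)$.

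The only subtle point is the choice of bilinear form. The involution has to sit on the second argument in order to recover $\mathbb{C}$-bilinearity after passing through the middle slot of the (conjugate-linear-in-the-middle) triple product, and one has to keep an auxiliary variable $x\in E$ (together with a scalar functional $\phi$) in order to encode orthogonality in a JB$^*$-triple via vanishing of the operator $L(T(a),T(b))$ rather than of a single triple product. With this form in place, the rest is bookkeeping and a direct appeal to the Jordan Goldstein theorem of \cite{JamPeSidd2015}.
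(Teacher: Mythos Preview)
Your proof is correct and follows essentially the same route as the paper: the paper also defines $V_\phi(a,b)=\phi(\{T(a),T(b^*),x\})$ for fixed $\phi\in E^*$ and $x\in E$, checks that hypothesis $(c)$ makes it orthogonal on $\mathcal{A}^+$, upgrades this to orthogonality on all of $\mathcal{A}$ via \cite{JamPeSidd2015}, and finishes with Hahn--Banach. The only cosmetic difference is that the paper spells out the intermediate step from $\mathcal{A}^+$ to $\mathcal{A}_{sa}$ (via the decomposition $x=x^+-x^-$, $y=y^+-y^-$ with $x^\sigma\perp y^\tau$), whereas you invoke the equivalence stated in the paragraph preceding the proposition.
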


\begin{proof} $(a) \Rightarrow (b) \Rightarrow (c)$ are clear. Now let us assume that $T:\mathcal{A}\to E$ preserves orthogonality on $\mathcal{A}^+.$ Let us fix $x\in E$ and $\phi \in E^*$ and define $V_{\phi}:\mathcal{A}\times \mathcal{A}\to \CC$ by $V_{\phi}(a,b):=\phi (\{T(a),T(b^*),x \}).$ By assumptions, $V_{\phi}$ is orthogonal on $\mathcal{A}^+.$ An analogous argument to that in \cite[Proposition 1]{GarPeUnitCstaralg20} shows that $V_{\phi}$ is orthogonal on $\mathcal{A}_{sa}$ (just apply that if $x\perp y$ in $\mathcal{A}_{sa}$, and we write these elements as differences of orthogonal positive elements $x = x^+-x^-$ and $y= y^+ - y^-$, we have $x^\sigma y^{\tau} =0$ for $\sigma, \tau\in \{\pm\}$). Thus, by \cite[Corollary 3.14, Propositions 3.8 and 3.9]{JamPeSidd2015} we deduce that $V_{\phi}$ is orthogonal on $\mathcal{A}$. Let us fix $a\perp b$ in $\mathcal{A}.$ Then $V_{\phi}(a,b^*)=\phi(\{T(a),T(b),x \} )=0. $ The Hahn-Banach theorem shows that $\{T(a),T(b),x\}=0.$ It follows from the arbitrariness of $x$ in $E$ that $L(T(a),T(b))=0,$ equivalently, $T(a)\perp T(b),$ witnessing that $T$ preserves orthogonality on $\mathcal{A}.$
\end{proof}

An element $a$ in a unital JB$^*$-algebra $\mathcal{A}$ is \emph{invertible} if there exists $b\in \mathcal{A}$ such that $a \circ b = 1$ and $a^2 \circ b = a.$ The element $b$ is unique and will be denoted by $a^{-1}$ (cf. \cite[3.2.9]{HOS} or \cite[Definition 4.1.2]{Cabrera-Rodriguez-vol1}). It is known that $a\in \mathcal{A}$ is invertible if and only if the mapping $U_a$ is invertible and in such a case $U_a^{-1} = U_{a^{-1}}$ \cite[Theorem 4.1.3]{Cabrera-Rodriguez-vol1}. For $a\in \mathcal{A}$ invertible, the mapping $M_a$ need not be, in general, invertible. However, if $a\in Z(\mathcal{A})$ is invertible, it can be easily checked that $M_{a^2} = U_a$ is invertible. If we further assume that $a$ is invertible and positive in  $Z(\mathcal{A})$, the mapping $M_a = U_{a^{\frac12}}$ is invertible too.\smallskip

Let $u$ be an element in $\mathcal{A}$. We say that $u$ is a \emph{unitary} if it is invertible in the Jordan sense and its inverse coincides with $u^*$ (cf. \cite[3.2.9]{HOS} and \cite[Definition 4.1.2]{Cabrera-Rodriguez-vol1}). Proposition 4.3 in \cite{BraKaUp78} assures that an element $u$ in $\mathcal{A}$ is a unitary if and only if it is a unitary in the JB$^*$-triple sense, that is, $\mathcal{A}_2 (u) = \mathcal{A}$. \smallskip

We have already gathered the tools required to establish a generalization of \cite[Lemma 1]{GarPeUnitCstaralg20}.

\begin{lemma}\label{l kernel is an ideal JBstar} Let $T : \mathcal{A}\to E$ be an orthogonality preserving bounded linear operator from a JB$^*$-algebra to a JB$^*$-triple. Let $h=T^{**} (1)\in E^{**}$, $r$ the range tripotent of $h$ in $E^{**}$, and let $S:
\mathcal{A} \to E^{**}$ denote the triple homomorphism given by Theorem \ref{thm characterization of OP JBstar}. Then $\ker(T) =\ker (S)$ is a norm closed (triple) ideal of $\mathcal{A}$. Moreover, the quotient mapping $\widehat{T}: \mathcal{A}/\ker(T)\to E,$ $\widehat{T}(x+\ker(T)) = T(x)$ is an  orthogonality preserving bounded linear mapping.
\end{lemma}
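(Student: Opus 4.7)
My plan is to execute three moves: (i) identify $\ker T$ with the kernel of the restriction $S|_{\mathcal{A}}$; (ii) observe this kernel is a closed Jordan ideal of $\mathcal{A}$, hence a triple ideal; and (iii) verify the induced map $\widehat{T}$ still preserves orthogonality.

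For (i), the inclusion $\ker(S|_{\mathcal{A}}) \subseteq \ker T$ is immediate from the formula $T(x) = U_{h^{\frac12}} S(x)$ given by Theorem \ref{thm characterization of OP JBstar}. For the reverse, I would first note that $h^{\frac12}$ lies in the JB$^*$-subalgebra generated by $h$, so it inherits operator commutativity with $S(x)$ from $h$; combining this with the Jordan identities and \eqref{eq ideintity for operator commutativity of two hermitian elements} rewrites $T(x)$ as $h \circ_r S(x)$. Decomposing $S(x)$ into its self-adjoint and skew-adjoint parts reduces the problem to showing that $h \circ_r y = 0$ forces $y = 0$ whenever $y$ is a self-adjoint element of $E_2^{**}(r)$ that operator commutes with $h$. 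For this, Shirshov--Cohn places $h$ and $y$ inside a commutative JBW$^*$-subalgebra of $E_2^{**}(r)$ that can be realised as an $L^\infty$-space; since the range tripotent of $h$ equals the unit $r$, the element $h$ is strictly positive in this realisation, so $h \cdot y = 0$ forces $y = 0$. This step is the main obstacle, as it requires translating the triple-theoretic hypothesis ``$r(h) = r$'' into the measure-theoretic statement ``$h$ has full support'' inside the appropriate commutative context.

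For (ii), since $S|_{\mathcal{A}}$ is a Jordan $^*$-homomorphism from $\mathcal{A}$ into the JB$^*$-algebra $E_2^{**}(r)$, its kernel is a norm-closed Jordan ideal of $\mathcal{A}$, and by the coincidence of Jordan and triple ideals recorded on page \pageref{page Jordan and triple ideals coincide} it is equally a closed triple ideal. For (iii), well-definedness, linearity and boundedness of $\widehat{T}$ are standard. Orthogonality preservation reduces, via Proposition \ref{p Ortpreserving on Asa JBstar}, to the case of two orthogonal positive elements $\widetilde{x}, \widetilde{y}$ in $\mathcal{A}/\ker T$. I would then choose positive lifts $x, y \in \mathcal{A}^+$ (as at the beginning of the proof of Lemma \ref{l AkPed for JBstar}) and apply Lemma \ref{l AkPed for JBstar} to obtain $a, b \in (\ker T)^+$ with $x-a \perp y-b$ in $\mathcal{A}$. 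Since $T$ preserves orthogonality and annihilates $a$ and $b$, we conclude $\widehat{T}(\widetilde{x}) = T(x-a) \perp T(y-b) = \widehat{T}(\widetilde{y})$.
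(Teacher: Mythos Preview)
Your proposal is correct, and parts (ii) and (iii) match the paper's argument essentially verbatim. The interesting divergence is in part (i), the reverse inclusion $\ker(T)\subseteq\ker(S)$.

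The paper handles this in one stroke without splitting into self-adjoint parts: from $h\circ_r S(x)=0$ with $h$ positive in $E_2^{**}(r)$, it cites \cite[Lemma~4.1]{BurFerGarPe09} to conclude $h\perp S(x)$; then $S(x)\perp r(h)=r$ by \cite[Lemma~1]{BurFerGarMarPe2008}, and since $S(x)\in E_2^{**}(r)$ this forces $S(x)=0$. This is shorter and treats arbitrary $x$ directly.

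Your route---decomposing $S(x)$ into self-adjoint pieces, then for each such piece $y$ passing to a commutative subalgebra where $h$ has full support---also works, and has the advantage of being self-contained rather than relying on the external lemma. Two small points of polish: the identity $T(x)=h\circ_r S(x)$ is already recorded in \eqref{eq fund equation conts OP JBSTAR}, so there is no need to re-derive it from $U_{h^{1/2}}$; and what you really need to place $h$ and $y$ in a commutative algebra is not Shirshov--Cohn per se but Topping's observation (recalled before Lemma~\ref{l equation for multiplier JB}) that operator-commuting self-adjoints commute associatively---Shirshov--Cohn only gives speciality, not commutativity. To realise this commutative subalgebra as an $L^\infty$-space you are implicitly passing to its weak$^*$ closure in $E_2^{**}(r)$, which is fine since $r=r(h)$ then lies in that closure as the weak$^*$ limit of $h^{[1/3^n]}$, giving the unit and the full-support conclusion you need.
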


\begin{proof}
The relation $\ker(T) \supseteq \ker (S)$ is absolutely clear. Let us assume that $x\in \ker(T).$ Then $h\circ_r S(x)=0.$ We recall that $h$ is positive in $E_2^{**}(r(h)).$ Thus, by Lemma 4.1 in \cite{BurFerGarPe09}, $h\circ_r S(x)=0$ is equivalent to $h\perp S(x).$ Now, by \cite[Lemma 1]{BurFerGarMarPe2008} $S(x)\perp r(h)=r.$ Since $S(x)$ lies in $E_2^{**}(r)$ we have $S(x)=0.$ Therefore $\ker(S)=\ker(T).$ Having in mind that $S$ is a triple homomorphism we deduce that $\ker(S)$ is a norm closed triple ideal, and hence a Jordan ideal, of $\mathcal{A}$ (see comments in page \pageref{page Jordan and triple ideals coincide}).\smallskip

Clearly $\mathcal{A}/\ker(T)$ is a JB$^*$-algebra (cf. comments in \pageref{page Jordan and triple ideals coincide}) and $\widehat{T}$ is well defined. Proposition \ref{p Ortpreserving on Asa JBstar} shows that in order to see that $\widehat{T}$ is orthogonality preserving it suffices to show that $\widehat{T}$ preserves orthogonality on $\left(\mathcal{A}/\ker(T)\right)^{+}$. Let us take $x+\ker(T), y+\ker(T)\in \left(\mathcal{A}/\ker(T)\right)^{+}$ with $(x+\ker(T)) (y+\ker(T))=0$. Applying Lemma \ref{l AkPed for JBstar} we find $a,b\in \ker(T)^{+}$ satisfying $(x-a)\perp (y-b)$. By hypotheses $\widehat{T}(x+\ker(T)) =T(x-a)\perp T(y-b) = \widehat{T}(y+\ker(T))$.
\end{proof}

Let $a$ and $b$ be two hermitian elements in a JB$^*$-algebra $\mathcal{A}$. 
We have already commented that the JB$^*$-subalgebra generated by $a$ and $b$ is a JC$^*$-subalgebra of some $B(H)$ \cite[Corollary 2.2]{Wright77}. We can further conclude that when $a$ and $b$ are regarded as elements in $B(H)$, they commute in the usual sense whenever they operator
commute in $\mathcal{A}$ \cite[Proposition 1]{Topping}. 

\begin{lemma}\label{l equation for multiplier JB}
Let $T:\mathcal{A}\to E$ be a bounded linear operator preserving orthogonality from a JB$^*$-algebra to a JB$^*$-triple. Let $h$, $r$ and $S$ be those given by Theorem \ref{thm characterization of OP JBstar}.
Then the identity \begin{equation}\label{eq identity multiplier JB} \{T(a), T(a),T(a)\}=h^{[3]} \circ_r S(a^{3}),
\end{equation} holds for every $a\in \mathcal{A}_{sa}$. Furthermore, the operator $\mathcal{A}\ni a \mapsto h^{[3]} \circ_r S(a)$ is $E$-valued.
\end{lemma}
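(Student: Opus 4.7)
The plan is to reduce both sides of \eqref{eq identity multiplier JB} to a computation inside a commutative JB$^*$-subalgebra of $(E_2^{**}(r),\circ_r,*_r)$ generated by $h$ and $S(a)$.

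First, by Theorem \ref{thm characterization of OP JBstar} we have $T(a)=h\circ_r S(a)\in E_2^{**}(r)$, and the triple product $\{T(a),T(a),T(a)\}$ may be computed inside the JB$^*$-algebra $(E_2^{**}(r),\circ_r,*_r)$ via \eqref{eq triple product JBstar algebra}. For $a\in \mathcal{A}_{sa}$, the element $S(a)$ is $*_r$-self-adjoint (since $S$ is a Jordan $^*$-homomorphism), $h$ is positive, and the two operator commute by Theorem \ref{thm characterization of OP JBstar}. The Shirshov--Cohn/Macdonald fact and Topping's theorem \cite{Topping} recalled just before the lemma realize the JB$^*$-subalgebra they generate as a JC$^*$-subalgebra of some $B(H)$ in which $h$ and $S(a)$ appear as commuting self-adjoint operators; this subalgebra is therefore a commutative C$^*$-algebra, where Jordan and associative products coincide.

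Inside this commutative subalgebra, $T(a)=h\circ_r S(a)$ is $*_r$-self-adjoint, so applying \eqref{eq triple product JBstar algebra} with all three entries equal to $T(a)$ collapses the triple product to $T(a)^{3}=h^{3}\circ_r S(a)^{3}$. The same formula applied to the self-adjoint $h$ gives $h^{[3]}=h^{3}$, and since $S$ is a Jordan $^*$-homomorphism and the Jordan subalgebra generated by the single element $a$ is associative, $S(a^{3})=S(a)^{3}$. Assembling these three identities yields
\begin{equation*}
\{T(a),T(a),T(a)\}=T(a)^{3}=h^{3}\circ_r S(a)^{3}=h^{[3]}\circ_r S(a^{3}),
\end{equation*}
which is precisely \eqref{eq identity multiplier JB}.

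For the last assertion, the left-hand side of \eqref{eq identity multiplier JB} lies in $E$, because $T$ is $E$-valued and $E$ is a JB$^*$-subtriple. Hence $h^{[3]}\circ_r S(a^{3})\in E$ for every $a\in \mathcal{A}_{sa}$. Continuous functional calculus writes an arbitrary $b\in \mathcal{A}_{sa}$ as $b=c_{+}^{3}-c_{-}^{3}$ with $c_{\pm}\in \mathcal{A}^{+}$ the cube roots of the positive and negative parts of $b$; linearity then extends membership in $E$ to $h^{[3]}\circ_r S(b)$ for every $b\in \mathcal{A}_{sa}$, and complex linearity concludes for all $b\in \mathcal{A}$. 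The principal difficulty in this plan is the first step, namely justifying that $h$ and $S(a)$ jointly generate a commutative subalgebra of $E_2^{**}(r)$; once this is in hand, the remainder is a routine manipulation of commuting cubes.
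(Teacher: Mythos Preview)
Your proof is correct and follows essentially the same route as the paper's: both embed the JB$^*$-subalgebra of $(E_2^{**}(r),\circ_r,*_r)$ generated by the commuting self-adjoint elements $h$ and $S(a)$ into some $B(H)$ and carry out the cube computation in the resulting commutative C$^*$-algebra. For the final statement the paper applies \eqref{eq identity multiplier JB} directly to the self-adjoint cube root $b^{[1/3]}$ (using $(b^{[1/3]})^{3}=b$ for $b\in\mathcal{A}_{sa}$) instead of splitting $b$ into positive and negative parts, but this is only a cosmetic difference.
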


\begin{proof}  We know that $h$ is positive in $ E_2^{**}(r)$, and hence $h^{[3]} = (h\circ_r h)\circ_r h = h^3$. Let us fix $a\in \mathcal{A}_{sa}.$ Proposition 4.1 in \cite{BurFerGarPe09} shows that $T(a)\in E_2^{**}(r)_{sa}$ and that the sets $\{T(a) , h\}$ and $\{ S(a), h\}$ are formed by pairs of elements which operator commute in $ E_2^{**}(r)$ (see also Theorem \ref{thm characterization of OP JBstar}). Therefore $S(a)$ and $h$ generate a JC$^*$-subalgebra of some $B(H),$ and they commute in the usual sense. Let us write $x\cdot y$ to denote the product of two elements $x,y$ in this particular $B(H)$ space. Then we have $x\circ_r y =\frac{1}{2}(x\cdot y+y\cdot x),$ and $T(a) = h\circ_r S(a) = h\cdot S(a).$ By the uniqueness of the triple product (cf. \cite[Proposition 5.5]{Ka}) we have
$$\{T(a),T(a),T(a)\}=T(a)\cdot T(a) \cdot T(a)=h\cdot S(a)\cdot h\cdot S(a)\cdot h\cdot S(a) $$ $$=h^3\cdot S(a)^3= h^3 \cdot S(a^3) =h^{[3]}\circ_r S(a^3), $$ which proves the desired identity.\smallskip

Let us fix $b\in \mathcal{A}_{sa}$ it follows from the previous identity that  $$h^{[3]}\circ_r S(b)=h^{[3]}\circ_r S((b^{[\frac{1}{3}]})^3)=\{T(b^{[\frac{1}{3}]}),T(b^{[\frac{1}{3}]}),T(b^{[\frac{1}{3}]})\}\in E,$$ thus $h^{[3]}\circ_r S(\mathcal{A}_{sa})\subseteq E,$ and hence the second statement follows.
\end{proof}

The next result is a consequence of the properties commented before the previous Lemma \ref{l equation for multiplier JB}.

\begin{lemma}\label{l lemma funcional calculus operator commute}
Lat $a$ and $b$ be two hermitian elements in a JB$^*$-algebra $\mathcal{A}.$ Suppose that $a$ and $b$ operator commute. Let $c$ and $d$ be elements in the JB$^*$-subalgebras of $\mathcal{A}$ generated by $a$ and $b$, respectively. Then the elements $c$ and $d$ operator commute.
\end{lemma}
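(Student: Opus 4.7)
The plan is to reduce operator commutativity of $c$ and $d$ to the intrinsic algebraic identity provided by \eqref{eq ideintity for operator commutativity of two hermitian elements}, and then verify that identity inside an associative realisation of the subalgebra generated by $a$ and $b$. Since the map $(c,d)\mapsto[M_c,M_d]$ is bilinear, and since the JB$^*$-subalgebras of $\mathcal{A}$ generated by $a$ and by $b$ are commutative C$^*$-subalgebras (hence closed under the involution), I would first decompose $c=c_1+ic_2$ and $d=d_1+id_2$ into hermitian components still lying in those respective subalgebras. This reduces the problem to the case where both $c$ and $d$ are hermitian.

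With both elements hermitian, \eqref{eq ideintity for operator commutativity of two hermitian elements} tells me it suffices to verify the single identity $c^2\circ d=\{c,d,c\}$ between the corresponding elements of $\mathcal{A}$. Let $\mathcal{C}$ denote the JB$^*$-subalgebra of $\mathcal{A}$ generated by $a$ and $b$. By the combination of Wright's theorem with Macdonald's and Shirshov--Cohn's theorems recalled just before the statement of the lemma, $\mathcal{C}$ admits an isometric Jordan $^*$-embedding $\pi:\mathcal{C}\hookrightarrow B(H)$ for some complex Hilbert space $H$. Because $a$ and $b$ operator commute in $\mathcal{A}$, they also operator commute in $\mathcal{C}$, so Topping's result gives $\pi(a)\pi(b)=\pi(b)\pi(a)$ in $B(H)$. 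The associative C$^*$-subalgebra of $B(H)$ generated by $\pi(a)$ and $\pi(b)$ is therefore abelian, and since the images under $\pi$ of the JB$^*$-subalgebras generated by $a$ and by $b$ coincide with the commutative C$^*$-subalgebras of $B(H)$ generated by $\pi(a)$ and by $\pi(b)$, the elements $\pi(c)$ and $\pi(d)$ commute associatively in $B(H)$.

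Using $\pi(d)^*=\pi(d)$ and $\pi(c)\pi(d)=\pi(d)\pi(c)$, a direct computation in $B(H)$ yields
$$\pi(c)^2\circ\pi(d)=\pi(c)^2\pi(d)=\pi(c)\pi(d)\pi(c)=\{\pi(c),\pi(d),\pi(c)\}.$$
Since $\pi$ is a Jordan $^*$-isomorphism onto its image and the triple product on a JB$^*$-algebra is determined by the Jordan product and the involution via \eqref{eq triple product JBstar algebra}, pulling this identity back gives $c^2\circ d=\{c,d,c\}$ in $\mathcal{C}$ and hence in $\mathcal{A}$. An application of \eqref{eq ideintity for operator commutativity of two hermitian elements} then delivers the operator commutativity of $c$ and $d$ in $\mathcal{A}$. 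The conceptual obstacle is that operator commutativity is a statement about the multiplication operators on all of $\mathcal{A}$, not only on $\mathcal{C}$, so a naive ``work inside $\mathcal{C}$ and transport to $B(H)$'' strategy would not suffice; the intrinsic algebraic reformulation provided by \eqref{eq ideintity for operator commutativity of two hermitian elements} is precisely what allows the transfer to the associative setting to be legitimate.
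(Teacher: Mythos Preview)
Your proof is correct and follows the same route the paper gestures at: the lemma is stated there without proof, merely as ``a consequence of the properties commented before the previous Lemma~\ref{l equation for multiplier JB}'', namely the realisation of the JB$^*$-subalgebra generated by two hermitians as a JC$^*$-subalgebra of some $B(H)$ together with Topping's result. You make explicit the one step the paper leaves tacit---using the intrinsic characterisation \eqref{eq ideintity for operator commutativity of two hermitian elements} to transfer the conclusion back to operator commutativity in all of $\mathcal{A}$ rather than merely in $\mathcal{C}$---and your reduction to hermitian $c$ and $d$ via bilinearity of $(c,d)\mapsto[M_c,M_d]$ is the natural way to handle the general case.
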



Our next goal is a Jordan version of \cite[Propositions 2 and 3]{GarPeUnitCstaralg20}. The proof will require some translations to the Jordan terminology.

\begin{proposition}\label{p surjective OP preserves multipliers JB} Let $T : \mathcal{A}\to E$ be a surjective bounded linear operator preserving orthogonality from a JB$^*$-algebra to a JB$^*$-triple. Let $h=T^{**} (1)\in E^{**},$ $r$ the range tripotent of $h$ in $E^{**}$, and let $S: \mathcal{A}\to E_2^{**}(r)$ denote the Jordan $^*$-homomorphism given by Theorem \ref{thm characterization of OP JBstar}. Then the following statements hold:
\begin{enumerate}[$(a)$]
\item $r$ is a unitary in $E^{**};$
\item $h$ belongs to $M(E)$;
\item $h$ is invertible (and positive) in the JBW$^*$-algebra ${E}^{**}= E^{**}_2 (r)$;
\item $r$ belongs to $M(E)$, and consequently $E$ is a JB$^*$-algebra;
\item The triple homomorphism $S$ is $E$-valued and surjective;
\item If $x\in M(\mathcal{A})$ then $T^{**}(x)\in M(E) $;
\item The quotient mapping $\widehat{T}: \mathcal{A}/\ker(T)\to E,$ $\widehat{T}(x+\ker(T)) = T(x)$ is an orthogonality preserving bounded linear bijection;
\item There exist a triple homomorphism $S: \mathcal{A}\to E$ and a triple monomorphism $\widehat{S}: \mathcal{A}/\ker(S)\to E$ satisfying \begin{enumerate}[$(1)$] \item $\ker(T)= \ker(S)$;
    \item $\widehat{S} (x+{\ker(S)}) =S(x)$;
    \item $\widehat{S}(x)$ and $h$ operator commute in $E^{**}_2(r)$, for all $x\in \mathcal{A};$
    \item $\widehat{T}(x+\ker(T)) =T(x) = h \circ_r \widehat{S}(x),$ 
    for all $x\in \mathcal{A}$.
\end{enumerate}
\end{enumerate}
\end{proposition}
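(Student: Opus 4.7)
The plan is to prove the seven items in the order $(a)\to(c)\to(b)\to(d)\to(e)\to(f)\to(g)\to(h)$, each building on its predecessors via Theorem \ref{thm characterization of OP JBstar} and Lemmas \ref{l kernel is an ideal JBstar}--\ref{l lemma funcional calculus operator commute}. For $(a)$, Theorem \ref{thm characterization of OP JBstar} gives $T(\mathcal{A})\subseteq E_2^{**}(r)$; since $T(\mathcal{A})=E$ is weak$^*$-dense in $E^{**}$ while the Peirce subspace $E_2^{**}(r)$ is weak$^*$-closed, $E^{**}=E_2^{**}(r)$ and hence $r$ is unitary. For $(c)$, surjectivity of $T$ passes to the bidual, so $T^{**}=U_{h^{\frac12}}\circ S^{**}$ is surjective, forcing $U_{h^{\frac12}}$ (and hence $U_h=U_{h^{\frac12}}^{2}$) to be surjective on the JBW$^*$-algebra $E_2^{**}(r)$; by \cite[Theorem 4.1.3]{Cabrera-Rodriguez-vol1}, $h$ is then invertible (and positive) in $E_2^{**}(r)$.

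For $(b)$, I would compute $\{T(a),h,T(b)\}$ for $a,b\in\mathcal{A}$. Using $T(a)=h\circ_r S(a)$, $T(b)=h\circ_r S(b)$, and the fact that $h$ operator-commutes with $S(\mathcal{A})$ (Theorem \ref{thm characterization of OP JBstar}, Lemma \ref{l lemma funcional calculus operator commute}), a direct Jordan manipulation collapses the triple product to
\begin{equation*}
\{T(a),h,T(b)\} \;=\; h^{3}\circ_r S(a\circ b).
\end{equation*}
The $E$-valuedness clause of Lemma \ref{l equation for multiplier JB}, extended by complex linearity from $\mathcal{A}_{sa}$ to $\mathcal{A}$, places $h^{3}\circ_r S(c)$ in $E$ for every $c\in\mathcal{A}$, so $\{E,h,E\}\subseteq E$ and $h\in M(E)$. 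For $(d)$, the invertibility of $h$ gives $\sigma(h)\subseteq[c,\|h\|]$ for some $c>0$, so the continuous functions $t\mapsto t^{1/(2n+1)}$ converge uniformly to $1$ on $\sigma(h)$; hence $h^{[1/(2n+1)]}\to r$ in norm. Each $h^{[1/(2n+1)]}$ lies in the norm-closed JB$^*$-subalgebra of $E^{**}$ generated by $h$, and by $(b)$ this subalgebra is contained in the norm-closed $M(E)$; therefore $r\in M(E)$, which identifies $r$ as the unit of $M(E)$ and exhibits $E$ as a JB$^*$-algebra under $(\circ_r,*_r)$.

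With $r,h\in M(E)$ and $h$ invertible in $E^{**}$, continuous functional calculus inside the unital JB$^*$-algebra $M(E)$ yields $h^{\pm 1/2}\in M(E)$; since $E$ is a Jordan (triple) ideal of $M(E)$, both $U_{h^{1/2}}$ and $U_{h^{-1/2}}$ map $E$ into $E$, and being mutually inverse on $E_2^{**}(r)$ they restrict to inverse bijections on $E$. Consequently $S(\mathcal{A})=U_{h^{-1/2}}(T(\mathcal{A}))=U_{h^{-1/2}}(E)=E$, giving $(e)$. For $(f)$, the analogous computation applied to $x\in M(\mathcal{A})$ gives $\{T^{**}(x),T(a),T(b)\}=h^{3}\circ_r S(\{x,a,b\})$, and since $\{x,a,b\}\in\mathcal{A}$ this lies in $E$ by Lemma \ref{l equation for multiplier JB}, so $T^{**}(x)\in M(E)$. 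Part $(g)$ is immediate from the surjectivity of $T$ together with Lemma \ref{l kernel is an ideal JBstar}, and $(h)$ then follows by taking $\widehat{S}:\mathcal{A}/\ker(S)\to E$ as the injectivization of the now $E$-valued $S$ through the equality $\ker(S)=\ker(T)$; the four listed properties transfer directly from the results already established for $S$ and $T$.

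The principal obstacle is $(b)$: establishing $h\in M(E)$ before $r\in M(E)$ is known and without a priori functional calculus of $h$ inside $M(E)$. My workaround is to exploit that $h$ is ``centrally positioned'' with respect to $S(\mathcal{A})$ (operator commutativity, propagated via Lemma \ref{l lemma funcional calculus operator commute}) in order to carry out the triple-product computation in a JC$^*$-model supplied by Shirshov--Cohn, thereby reducing $\{T(a),h,T(b)\}$ to $h^{3}\circ_r S(a\circ b)$ and invoking Lemma \ref{l equation for multiplier JB}. Once $(b)$ is secured, the remaining parts cascade by continuous functional calculus inside the unital JB$^*$-subalgebra $M(E)$.
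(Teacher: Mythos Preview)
Your overall architecture matches the paper's closely, and parts $(a)$, $(d)$, $(e)$, $(g)$, $(h)$ are essentially the same arguments. Two points deserve attention.

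\textbf{The argument for $(c)$ has a genuine gap.} You write $T^{**}=U_{h^{1/2}}\circ S^{**}$ and conclude that $U_{h^{1/2}}$ is surjective on $E_2^{**}(r)$. But at this stage $S$ is only known as a map $\mathcal{A}\to E^{**}$ (you have not yet proved $(e)$), so its bitranspose lands in $(E^{**})^{**}$, not in $E^{**}$; the factorization of $T^{**}$ through $U_{h^{1/2}}$ at the bidual level is therefore unjustified. The paper avoids this by a direct weak$^*$-compactness argument: after passing to the quotient so that $T$ is bijective, it picks $c\in\mathcal{A}^{**}$ with $T^{**}(c)=r$, approximates $c$ by a bounded net $(c_\lambda)\subset\mathcal{A}$, extracts a weak$^*$-convergent subnet of $(S(c_\mu))$ with limit $z\in E^{**}$, and uses the separate weak$^*$-continuity of $\circ_r$ to obtain $r=h\circ_r z$; since $z$ is a weak$^*$-limit of elements operator commuting with $h$, so does $z$, and $h$ is invertible with $h^{-1}=z$. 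Your idea can be rescued along the same lines: use weak$^*$-continuity of $U_{h^{1/2}}$ on $E^{**}$ together with Banach--Alaoglu applied to $(S(c_\mu))$ to show directly that $U_{h^{1/2}}$ is surjective, and then the fundamental identity $U_{U_h(x)}=U_hU_xU_h$ (with $U_h(x)=r$) forces $U_h$ bijective.

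\textbf{A slip in $(b)$.} You compute $\{T(a),h,T(b)\}$, which yields $\{E,h,E\}\subseteq E$; but the defining condition for $h\in M(E)$ in the paper (following \cite{BuChu92}) is $\{h,E,E\}\subseteq E$, and at this point you do not yet know that $E$ is $*_r$-closed, so you cannot pass from one to the other. The paper computes instead $\{h,T(a),T(b)\}=h^{3}\circ_r S(a^{*}\circ b)$; the Jordan manipulation is identical in spirit to yours (expand in $(E_2^{**}(r),\circ_r,*_r)$ and use operator commutativity of powers of $h$ with $S(\mathcal{A})$ via Lemma~\ref{l lemma funcional calculus operator commute}), so this is a one-line fix. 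Note also that the paper proves $(b)$ \emph{before} $(c)$, but neither order is forced once the weak$^*$ argument for $(c)$ is in place.
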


\begin{proof} $(a)$ Proposition 4.1 in \cite{BurFerGarPe09} proves that $T(\mathcal{A})\subseteq E^{**}_2 (r)$, and thus the surjectivity of $T$ implies that $E \subseteq E^{**}_2 (r) = P_2(r) (E^{**})$. Having in mind that $E$ is weak$^*$ dense in $E^{**}$, $E^{**}_2 (r)$ is weak$^*$-closed, and the triple product of $E^{**}$ is separately weak$^*$-continuous, we conclude that $E^{**} = E^{**}_2 (r),$ witnessing that $r$ is a unitary in $E^{**}$ and the latter is a JBW$^*$-algebra. 
\smallskip

$(b)$ We shall next show that $h$ lies in $ M(E)$. Namely, since $T$ is surjective, it is enough to prove that $\{h,T(a),T(b)\}$ lies in $E,$ for every $a,b\in \mathcal{A}.$ Having in mind that $S$ and $T$ are symmetric operators from $\mathcal{A}$ into $E_2^{**}(r),$ Lemma \ref{l lemma funcional calculus operator commute} implies that for each $a\in \mathcal{A}_{sa}$ both $S(a)$ and $T(a)$ operator commute in $E_2^{**}(r)$ with every power of $h.$ Now for $a\in \mathcal{A}$ we write $a=a_1+ia_2$ with $a_1,a_2\in \mathcal{A}_{sa}$
$$(h^n\circ_r x)\circ_r S(a)=(h^n\circ_r x)\circ_r (S(a_1)+iS(a_2))=(h^n\circ_r x)\circ S(a_1)+i (h^n\circ_r x)\circ_r S(a_2) $$ $$=h^n\circ_r(x \circ_r S(a_1))+ih^n\circ_r (x\circ_rS(a_2))=h^n\circ_r (x\circ_rS(a)),$$ for all $x\in E^{**} = E^{**}_2 (r)$, witnessing that $h^n$ and $S(a)$ operator commute for all $n\in \NN$ and $a\in \mathcal{A}.$ Powers of $h$ are taken in the JB$^*$-algebra $E_2^{**}(r)$.  Clearly, the same holds for $T(a).$\smallskip

Now we claim that \begin{equation}\label{eq for h multiplier JBstar}
     \{h,T(a),T(b)\}=h^3\circ_r S(a^*\circ b)
\end{equation} holds for every $a,b\in \mathcal{A}.$ Indeed, for $a,b\in \mathcal{A}$ we have
 $$\{h,T(a),T(b)\}=(h\circ_r T(a)^{*_r})\circ_r T(a)+(T(b)\circ_r T(a)^{*_r})\circ_r h-(h\circ_r T(b))\circ_r T(a)^{*_r} $$
 $$=(h\circ_r T(a^*))\circ_r T(a)+(T(b)\circ_r T(a^*))\circ_r h-(h\circ_r T(b))\circ_r T(a^*) .$$
We shall show that each one of the summands on the right hand side equals $h^{[3]}\circ_r S(a^*\circ b).$
 $$(h\circ_r T(a^*))\circ_r T(b) =(h \circ_r (h \circ_r S(a^*) ))\circ_r T(b)=(h^2\circ_r S(a^*))\circ_r T(b)$$ $$ =S(a^*)\circ_r (h^2\circ_r T(b))= S(a^*)\circ_r (h^2\circ_r (h\circ S(b) ))=S(a^*)\circ_r (h^3\circ_r S(b))$$ $$=(S(a^*)\circ_r S(b))\circ_r h^3=h^3\circ_r S(a^*\circ b) = h^{[3]} \circ_r S(a^* \circ b). $$

Similar computations yield $$(T(b)\circ_r T(a^*))\circ_r h =h^{[3]}\circ _r S(a^*\circ b), \hbox{ and } (h\circ_r T(b))\circ_r T(a^*)=h^{[3]}\circ _r S(a^*\circ b),$$ which prove the claim.\smallskip

Since by Lemma \ref{l equation for multiplier JB} the operator $a\mapsto h^{[3]}\circ_r S(a)$ is $E$-valued, the equation \eqref{eq for h multiplier JBstar} shows that $h\in M(E).$\smallskip

$(c)$ By Lemma \ref{l kernel is an ideal JBstar} $\ker(T) = \ker(S),$ the quotient mapping $\widehat{T} : \mathcal{A}/\ker(T)\to E$, $\widehat{T} (x+\ker(T)) = T(x)$ is a bijective bounded linear operator preserving orthogonality from $\mathcal{A}/\ker(T)$ to $E$. The quotient mapping $\widehat{S} : \mathcal{A}/\ker(T)\to (E^{**},\circ_r,*_{r})$ is a Jordan $^*$-isomorphism, and $\widehat{T} (a+\ker(T)) = T(a) = h\circ_r S(a) = h\circ_r \widehat{S} (a+\ker(T))$ for all $a\in \mathcal{A}$.\smallskip

We shall employ a similar argument to that in the proof of \cite[Proposition 2$(c)$]{GarPeUnitCstaralg20}. By the arguments above we can assume that $T$ (and hence $T^{**}$) is a bijection. Let us find $c$ in $\mathcal{A}^{**}$ such that $T^{**} (c) =1$. We can also find, via Goldstine's theorem, a bounded net $(c_{\lambda})_{\lambda}$ in $\mathcal{A}$ converging to $c$ in the weak$^*$ topology of $\mathcal{A}$. Consequently, $(T(c_{\lambda}))_{\lambda}\to T^{**} (1)$ in the weak$^*$ topology of $E^{**}$. On the other hand, the net $(S(c_{\lambda}))_{\lambda}$ is bounded in $E^{**}$, by the Banach--Alaoglu's theorem, there exists a subnet $(S(c_{\mu}))_{\mu}$ converging to some $z\in E^{**}$ in the weak$^*$  topology of $E^{**}$. Obviously the subnet $(T(c_{\mu}))_{\mu}\to T^{**} (1)$ in the weak$^*$ topology of $E^{**}$. Since the identity $T(c_{\mu}) = h\circ_r S(c_{\mu})$ holds for every $\mu$, we deduce from the separate weak$^*$ continuity of the product of $E_2(r)^{**}$ and the above facts that $r = T^{**} (c) = h\circ_r z$.\smallskip

Having in mind that, for each $a\in \mathcal{A}$, $h$ and $S(a)$ operator commute in the JBW$^*$-algebra $E_2^{**}(r)$ and that $z = \w^*\hbox{-}\lim_{\mu} S(c_{\mu})$, we conclude that $h$ and $z$ operator commute in $E_2^{**}(r)$. By combining this fact with the identity $r = T^{**} (c) = h\circ_r z$, it follows that $h$ is invertible in $(E^{**},\circ_r,*_{r})$ with $h^{-1} = z\in E^{**}$.\smallskip

$(d)$ Let us observe that $h$ is positive and invertible in the JBW$^*$-algebra $E^{**}=E_2^{**}(r)$ (see $(a)$, $(b)$ and $(c)$). The inverse of $h$ and the unit element $r$ both lie in the JB$^*$-subalgebra of $E_2^{**}(r)$ generated by $h$. We recall that the JB$^*$-subalgebra of $E_2^{**}(r)$ generated by $h$ coincides with the JB$^*$-subtriple of $E_2^{**}(r)$ generated by $h$ (see the comments in page \pageref{page ref subalgebra and subtriple generaed coincide}). By applying that $M(E)$ is a JB$^*$-subtriple of $E^{**}=E_2^{**}(r)$ containing $h$, we deduce that $h^{-1}$ and $r$ both belong to $M(E)$. \smallskip

Finally it trivially follows from $r\in M(E)$ that for each $a,b\in E$ we have $E\ni \{a,r,b\} = a\circ_r b$ and $E\ni \{r,a,r\} =a^{*_r}$ (for the latter we observe that $E$ is a triple ideal of $M(E)$), and thus $E$ is a JB$^*$-subalgebra of $E^{**} = E_2^{**} (r)$.\smallskip

$(e)$ Having in mind that $h\in M(E)$ is positive and invertible in the JBW$^*$-algebra $E^{**}_2(r)= E^{**}$ and operator commutes with every element in the images of $S$ and $T$, and the identity in \eqref{eq fund equation conts OP JBSTAR}, the desired conclusion follows from the identity $$ S(a)=U_{h^{\frac12}}^{-1} T(a)= U_{h^{-\frac12}} T(a) = \{h^{-\frac{1}{2}},T(a)^*,h^{-\frac{1}{2}}\}\in E,$$ for all $a\in \mathcal{A}$. It can be also deduced from the fact that $$S(a) = h^{-1}\circ_r T(a) = \{h^{-1}, r, T(a)\}\in E,$$ for all $a\in \mathcal{A}$, because $r,h\in M(E)$ with $h$ positive and invertible, $h^{-1}\in M(E)$, $E$ is a triple ideal of $M(E)$ and $h$ (and hence h$^{-1}$) operator commutes with every element in the images of $S$ and $T$.\smallskip

$(f)$ Let us fix $x\in M(\mathcal{A})$ and $a\in \mathcal{A}.$ Let us recall that, by Theorem \ref{thm characterization of OP JBstar}, the identity $$T(x) = h\circ_r S(x) = S(x)\circ_r h,$$ holds for all $x\in \mathcal{A}$. We know from $(e)$ that $S: \mathcal{A}\to (E, \circ_r, *_r)$ is a Jordan $^*$-homomorphism.  By the separate weak$^*$-continuity of the Jordan product of ${E}^{**} = E_2^{**}(r)$, the weak$^*$ continuity of $T^{**},S^{**} :\mathcal{A}\to E^{**}$ and the weak$^*$ density of $\mathcal{A}$ in $\mathcal{A}^{**}$ we can easily deduce that $$T^{**}(x) = h\circ_r S^{**}(x) = S^{**}(x)\circ_r h, \hbox{ for all } x\in \mathcal{A}.$$ Similar arguments are valid to show that $S^{**}:\mathcal{A}^{**}\to E^{**} = E_2^{**}(r)$ is a Jordan $^*$-homomorphism.\smallskip

We observe that $h$ and every element in the image of $S^{**}$ operator commute in the JBW$^*$-algebra $E^{**}_2(r)= E^{**}$ (essentially because $h$ and every element in the image of $S$ operator commute, cf. Theorem \ref{thm characterization of OP JBstar}). Since for each $a\in \mathcal{A}_{sa}^{**}$, the element $S^{**} (a)$ belongs to the self-adjoint part of $E^{**}_2(r)= E^{**}$ and operator commutes with $h$, we deduce from \cite[Lemma 5]{BurFerGarMarPe2008} that $h \circ_r S^{**} (a)$ and $h$ operator commute for every $a\in \mathcal{A}_{sa}^{**}$, and by linearity, for all $a\in \mathcal{A}^{**}$.\smallskip

Now, since $h$ lies in $ M(E)$ and $x\in M(\mathcal{A})$, by applying the above facts, we have
$$\begin{aligned} T^{**}(x)\circ_r T(a) &= (h\circ_r S^{**}(x))\circ_r (h\circ_r  S(a)) = h\circ_r ( (h\circ_r S^{**}(x)) \circ_r  S(a)) \\
&= h\circ_r ( ( S(a) \circ_r S^{**}(x)) \circ_r h) = h\circ_r ( S^{**} (a\circ x) \circ_r h)\\
&=h\circ_r ( S (a\circ x) \circ_r h) = h\circ_r T(x\circ a)\in {E},
\end{aligned}$$ for all $a\in \mathcal{A}$.  The surjectivity of $T$ proves that  $T^{**}(x)\circ_r E\subseteq E.$ We have shown that $T^{**} (x)$ is a multiplier of the JB$^*$-algebra $M(E,\circ_r, *_{r})$. The uniqueness of the triple product (cf. \cite[Proposition 5.5]{Ka}) assures that $T^{**} (x)\in M(E).$\smallskip

Finally, $(g)$ follows from Lemma \ref{l kernel is an ideal JBstar} and $(h)$ is a straight consequence of the previous statements.
\end{proof}

We can now finish this section with a complete description of all bijective bounded linear operators preserving orthogonality between JB$^*$-algebras, which follows as a consequence of Propositions \ref{p surjective OP preserves multipliers JB} and \ref{p Ortpreserving on Asa JBstar}.

\begin{corollary}\label{c Characterization bd OP plus bijective Jordan} Let $T : \mathcal{A}\to E$ be a bijective bounded linear operator from a JB$^*$-algebra to a JB$^*$-triple. Let $h=T^{**} (1)\in E^{**}$ and let $r$ be the range tripotent of $h$ in $E^{**}$. Then the following statements are equivalent:\begin{enumerate}[$(a)$] \item $T$ is orthogonality preserving;
\item The elements $h$ and $r$ belong to $M(E)$ with $h$ invertible and $r$ unitary and there exists a Jordan $^*$-isomorphism $S: \mathcal{A}\to (E,\circ_{r},*_{r})$ such that $h$ lies in $Z(E^{**},\circ_{r},*_{r})$, and $T(x)=h\circ_{r(h)} S(x) = U_{h^{\frac12}} S(x),$ for every $x\in \mathcal{A}$, where $h^{\frac12}$ denotes the square root of the positive element $h$ in the JB$^*$-algebra $E_2^{**} (r)$;
\item $T$ is biorthogonality preserving;
\item $T$ is orthogonality preserving on $\mathcal{A}_{sa}$;
\item $T$ is orthogonality preserving on $\mathcal{A}^{+}$;
\item $T$ preserves zero-triple-products, i.e. $$\{a,b,c\}=0 \Rightarrow \{T(a),T(b),T(c)\}=0;$$
\item $T$ preserves zero-triple-products in both directions, i.e. $$\{a,b,c\}=0 \Leftrightarrow \{T(a),T(b),T(c)\}=0.$$
\end{enumerate}
\end{corollary}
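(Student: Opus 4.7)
The plan is to close the cycle $(a) \Rightarrow (b) \Rightarrow (g) \Rightarrow (c) \Rightarrow (a)$, picking up the remaining equivalences along the way. The equivalences $(a) \Leftrightarrow (d) \Leftrightarrow (e)$ are immediate from Proposition \ref{p Ortpreserving on Asa JBstar}, the equivalence $(a) \Leftrightarrow (f)$ is the core content of Theorem \ref{thm characterization of OP JBstar}, and $(g) \Rightarrow (f)$ and $(c) \Rightarrow (a)$ are trivial. So the real work concentrates on $(a) \Rightarrow (b)$, $(b) \Rightarrow (g)$, and $(g) \Rightarrow (c)$.

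For $(a) \Rightarrow (b)$, I would invoke Proposition \ref{p surjective OP preserves multipliers JB}: the bijectivity of $T$ forces $\ker(T) = 0$, so by part $(g)$ of that proposition $\ker(S) = 0$, and the formula $S(a) = h^{-1} \circ_r T(a)$ (with $h^{-1} \in M(E)$ coming from parts $(b)$--$(d)$) shows $S(\mathcal{A}) = E$. Hence $S : \mathcal{A} \to (E, \circ_r, *_r)$ is a Jordan $^*$-isomorphism. Parts $(a)$--$(d)$ also supply $r$ unitary in $E^{**}$, $r, h \in M(E)$, and $h$ positive and invertible in $E^{**} = E^{**}_2(r)$. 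The only missing piece of statement $(b)$ is the centrality $h \in Z(E^{**}, \circ_r, *_r)$: by Theorem \ref{thm characterization of OP JBstar}, $h$ operator commutes with every $S(a)$ in $E^{**}_2(r)$, hence with every element of $E = S(\mathcal{A})$; a weak$^*$-density argument, relying on separate weak$^*$ continuity of the Jordan product of the JBW$^*$-algebra $E^{**}_2(r)$, extends this to every element of $E^{**}$. The formula $T(x) = h \circ_r S(x) = U_{h^{1/2}} S(x)$ is then just \eqref{eq fund equation conts OP JBSTAR}. The reverse implication $(b) \Rightarrow (a)$ is an immediate consequence of Theorem \ref{thm characterization of OP JBstar} applied to the structure given in $(b)$.

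For $(b) \Rightarrow (g)$, the centrality of $h$ (and hence of $h^{1/2}$, lying in the JB$^*$-subalgebra generated by $h$) reduces $U_{h^{1/2}}$ to $M_h$, so $T = M_h \circ S$ with $S$ a Jordan $^*$-isomorphism onto $(E, \circ_r, *_r)$. Expanding the Jordan triple product formula \eqref{eq triple product JBstar algebra} and repeatedly using that $h$ operator commutes with every element of $E^{**}_2(r)$, I expect to obtain
\[ \{T(a), T(b), T(c)\} = h^{[3]} \circ_r S(\{a,b,c\}), \]
where $h^{[3]} = h \circ_r h \circ_r h$ is invertible in $E^{**}$. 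Since $M_{h^{[3]}}$ is injective on $E^{**}$ (invertibility of $h^{[3]}$) and $S$ is a bijection, the vanishing of the left-hand side is equivalent to that of $\{a,b,c\}$, which gives $(g)$. For $(g) \Rightarrow (c)$ I would use the identity $a \perp b \Leftrightarrow L(a,b) = 0 \Leftrightarrow \{a,b,c\} = 0$ for all $c$; bijectivity and surjectivity of $T$ then translate a universal quantifier over $c \in \mathcal{A}$ into one over $T(c) \in E$, yielding both $a \perp b \Rightarrow T(a) \perp T(b)$ and its converse.

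The step I expect to be most delicate is $(a) \Rightarrow (b)$, specifically the weak$^*$-density extension of operator commutativity of $h$ from $E$ to all of $E^{**}$ needed to reach full centrality; the second delicate item is the clean bookkeeping of the triple-product expansion in $(b) \Rightarrow (g)$, where one must carefully move $h$ across each factor using the operator commutativity of $h$ with $S(a)$, with $S(b)^{*_r} = S(b^*)$, and with $r$ (which follows from $h^{*_r} = h$).
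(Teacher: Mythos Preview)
Your proposal is correct and follows essentially the same route the paper takes: the paper simply records the corollary as a consequence of Proposition~\ref{p surjective OP preserves multipliers JB} and Proposition~\ref{p Ortpreserving on Asa JBstar} (together with Theorem~\ref{thm characterization of OP JBstar}), and your cycle $(a)\Rightarrow(b)\Rightarrow(g)\Rightarrow(c)\Rightarrow(a)$ with the side equivalences $(a)\Leftrightarrow(d)\Leftrightarrow(e)$ and $(a)\Leftrightarrow(f)$ is exactly a fleshed-out version of that. Your two flagged ``delicate'' points---extending operator commutativity of $h$ from $E=S(\mathcal{A})$ to $E^{**}$ by weak$^*$-density, and the identity $\{T(a),T(b),T(c)\}=h^{[3]}\circ_r S(\{a,b,c\})$ via centrality of $h$---are precisely the details the paper leaves implicit, and your handling of both is sound.
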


\section{The centroid of a JB$^*$-triple in the study of one-parameter semigroups}\label{sec: one-parametri groups OP Jordan}

This section is aimed to establish a Jordan version of the description of one-parameter semigroups of orthogonality preserving operators on C$^*$-algebras developed in \cite[Theorem 3]{GarPeUnitCstaralg20}. To facilite the arguments we shall employ some results and terminology developed by S. Dineen and R. Timoney in the wider setting of JB$^*$-triples (see \cite{DiTi88}). According to the nomenclature in the just quoted paper, given a JB$^*$-triple $E$, the \emph{centroid}, $C(E),$ of $E$ is the set of all bounded linear operators $T:E\to E$ satisfying $$ T\{x,y,z\} = \{T(x), y,z\}, \hbox{ for all } x,y,z\in E.$$ It is known that $T\in B(E)$ lies in $C(E)$ if and only if $T$ commutes with all operators of the form $L(x,x)$ ($x\in E$) if and only if $T$ commutes with all operators of the form $L(x,y)$ ($x,y\in E$). It is further known that the centroid of $E$ coincides with the centralizer of the underlying Banach space $E$ in the sense of \cite{Beh79,Cunn67}, and it is precisely the center of the subalgebra of $B(E)$ generated by the Hermitian operators (cf. \cite[Theorem 2.8 and Corollary 2.10]{DiTi88}).\smallskip

We have already employed the center of a JB$^*$-algebra $\mathcal{A}$ in the previous section. The \emph{centroid} of $\mathcal{A}$ as JB$^*$-algebra is the set of all bounded linear operators $T:\mathcal{A} \to \mathcal{A}$ satisfying $T(x\circ y ) = T(x) \circ y$ for all $x,y\in \mathcal{A}$. To reassure the reader, we note that all notions and terminology are perfectly compatible. Actually, the centroid of $\mathcal{A}$ as JB$^*$-algebra coincides with the centroid of $\mathcal{A}$ as JB$^*$-triple \cite[Proposition 3.4]{DiTi88}, and moreover, a bounded linear operator $T$ on $\mathcal{A}$ belongs to the centroid if and only if $T= M_a$ for some element $a$ in the center of $M(\mathcal{A})$ \cite[Proposition 3.5]{DiTi88}. Actually, \cite[Proposition 3.5]{DiTi88} is only valid for unital JB$^*$-algebras, however, if $T$ is an element in the centroid of a non-unital JB$^*$-algebra $\mathcal{A}$, $T^{**}$ is an element in the centroid of $\mathcal{A}^{**}$, so there exists an element $a$ in the center of $\mathcal{A}^{**}$ such that $T^{**}(x) = M_{a} (x) = a\circ x,$ for all $x\in \mathcal{A}^{**}$. Since $T= T^{**}|_{\mathcal{A}}$ is $\mathcal{A}$-valued, the element $a$ belongs to $M(\mathcal{A})$.\smallskip

We shall need the following result gathered from different papers.

\begin{lemma}\label{l technical centroid with two different products} Let $u$ be a unitary in a unital JB$^*$-algebra $\mathcal{A}$.  Let $(\mathcal{A},\circ_u,*_{u})$ be the JB$^*$-algebra associated with $u$. The following statements hold:\begin{enumerate}[$(a)$] \item The identity $$\begin{aligned}\J xyz &= (x\circ y^*) \circ z + (z\circ y^*)\circ x - (x\circ z)\circ y^*  \\
&=  (x\circ_u y^{*_u}) \circ_u z + (z\circ_u y^{*_u})\circ_u x - (x\circ_u z)\circ_u y^{*_u} 
\end{aligned}$$ holds for all $x,y,z\in \mathcal{A}$;
\item If $h\in Z(\mathcal{A},\circ_u,*_{u})$, and $M_h^u$ denotes the Jordan multiplication operator by $a$ in $(\mathcal{A},\circ_u,*_{u})$ {\rm(}i.e., $M_h^u (x) =a \circ_{u} x${\rm)},
the equality $\{ M_h^u (x) , y, z\} = M_h^u \{x,y,z\}$, holds for all $x,y,z\in \mathcal{A}.$
\end{enumerate}
\end{lemma}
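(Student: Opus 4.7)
The plan is to prove part $(a)$ directly from the uniqueness of the triple product on a JB$^*$-triple, and then use it to deduce $(b)$ from the general theory of the centroid developed by Dineen and Timoney.

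For $(a)$, since $u$ is a unitary in the JB$^*$-algebra $\mathcal{A}$, it is also a unitary tripotent in $\mathcal{A}$ regarded as a JB$^*$-triple, so $\mathcal{A}=\mathcal{A}_2(u)$ and $(\mathcal{A},\circ_u,*_u)$ is a JB$^*$-algebra (as recalled in Subsection \ref{subsec:defi}). Viewing $(\mathcal{A},\circ_u,*_u)$ itself as a JB$^*$-triple via the standard formula \eqref{eq triple product JBstar algebra}, it carries the triple product
$$\J xyz_{u} := (x\circ_u y^{*_u}) \circ_u z + (z\circ_u y^{*_u})\circ_u x - (x\circ_u z)\circ_u y^{*_u}.$$
This equips the Banach space $\mathcal{A}$ with a JB$^*$-triple structure with the same norm. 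By Kaup's uniqueness of the triple product (see \cite[Proposition 5.5]{Ka}), any such JB$^*$-triple structure on a given complex Banach space is unique. Hence $\J xyz_{u} = \J xyz$ for all $x,y,z\in \mathcal{A}$, and combining with \eqref{eq triple product JBstar algebra} applied to the original Jordan product of $\mathcal{A}$ yields the double identity claimed in $(a)$.

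For $(b)$, assume $h\in Z(\mathcal{A},\circ_u,*_{u})$. Since $(\mathcal{A},\circ_u,*_u)$ is unital, \cite[Proposition 3.5]{DiTi88} applied to this JB$^*$-algebra tells us that the multiplication operator $M_h^u$ belongs to the centroid of $(\mathcal{A},\circ_u,*_u)$ viewed as a JB$^*$-algebra. By \cite[Proposition 3.4]{DiTi88}, the centroid of a JB$^*$-algebra as a JB$^*$-algebra coincides with its centroid as a JB$^*$-triple. Therefore
$$M_h^u \J xyz_u = \J{M_h^u(x)}yz_u \quad \text{for all } x,y,z\in \mathcal{A}.$$
But part $(a)$ gives $\J xyz_u = \J xyz$, so the above identity is exactly $M_h^u \J xyz = \J{M_h^u(x)}yz$, which is the assertion in $(b)$.

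The only potentially delicate step is part $(a)$: one must be confident that both $\mathcal{A}$ and $(\mathcal{A},\circ_u,*_u)$ really carry the same underlying Banach space and that the two triple products are defined on this common space, so that Kaup's uniqueness applies verbatim. This is true because $\circ_u$ and $*_u$ are defined on the same vector space $\mathcal{A}$, and the JB$^*$-norm of $(\mathcal{A},\circ_u,*_u)$ inherited from the Peirce-$2$ construction at $u$ coincides with the original norm of $\mathcal{A}$ (since $\mathcal{A}=\mathcal{A}_2(u)$). Once this is observed, everything else is a direct invocation of the cited results from \cite{Ka} and \cite{DiTi88}.
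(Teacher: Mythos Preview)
Your proof is correct and follows essentially the same route as the paper: part $(a)$ is obtained from Kaup's uniqueness of the triple product \cite[Proposition 5.5]{Ka} applied to the identity map between $\mathcal{A}$ and $(\mathcal{A},\circ_u,*_u)$, and part $(b)$ then follows from the Dineen--Timoney characterisation of the centroid \cite[Propositions 3.4 and 3.5]{DiTi88} combined with $(a)$. Your explicit mention of \cite[Proposition 3.4]{DiTi88} makes the passage from the JB$^*$-algebra centroid to the JB$^*$-triple centroid slightly more transparent than in the paper, but the argument is the same.
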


\begin{proof} $(a)$ As we commented in \eqref{eq triple product JBstar algebra}, $\mathcal{A}$ and $(\mathcal{A},\circ_u,*_{u})$ are JB$^*$-algebras for the original norm and the corresponding Jordan products and involutions. Therefore, the identity mapping is a surjective linear isometry between these two JB$^*$-triples, and thus it follows from \cite[Proposition 5.5]{Ka} and \eqref{eq triple product JBstar algebra} that the desired identity holds.\smallskip

$(b)$ Since $h\in Z(\mathcal{A},\circ_u,*_{u})$, the mapping $M_h^u$ is a centralizer of the JB$^*$-triple $(\mathcal{A},\{.,.,.\})$ (cf. \cite[Proposition 3.5]{DiTi88} and $(a)$). Consequently, $$M_h^u \{x,y,z\} = \{ M_h^u (x) , y, z\} \hbox{ for every } x,y,z\in \mathcal{A}.$$
\end{proof}

We recall that a \emph{triple derivation} on a JB$^*$-triple $E$ is a linear mapping $\delta: E\to E$ satisfying the so-called \emph{triple Leibniz' rule}: $$\delta\{a,b,c\} = \{\delta(a) , b , c\} + \{a, \delta( b ) , c\} +  \{a,b,\delta(c)\} \ \ \ (a,b,c\in E).$$ Let us give an example. Fix two elements $a,b\in E$. By the Jordan identity, the mapping $\delta (a,b) := L(a,b)-L(b,a)$ is a triple derivation on $E$ and obviously continuous. It is known that every triple derivation on a JB$^*$-triple is automatically continuous (see \cite[Corollary 2.2]{BarFri90}). Furthermore, by the separate weak$^*$ continuity of the triple product of every JBW$^*$-triple, we can conclude that for each triple derivation $\delta$ on a JB$^*$-triple $E$, the bitranspose $\delta^{**} : E^{**}\to E^{**}$ is a triple derivation too.\smallskip

Under the terminology of JB$^*$-triples and triple derivations, Proposition 4 in \cite{GarPeUnitCstaralg20} and some related results in \cite{PedS88} admit the next extension, which, as we shall see below, is even more natural and complete in this wider setting.

\begin{lemma}\label{l -parameter group of iso on a JB*-triple} Let $\{U_t : t\in\mathbb{R}_0^+\}$ be a uniformly continuous one-parameter semigroup of surjective isometries on a JB$^*$-triple $E$. Then there exists a triple derivation $\delta : E\to E$ satisfying $U_t = e^{t \delta}$ for all $t\in \mathbb{R}$. Furthermore, for each triple derivation $\delta$ on $E$ the assignment $t\mapsto e^{t \delta}$ is a one-parameter semigroup of surjective isometries {\rm(}triple automorphisms{\rm)} on $E$.
\end{lemma}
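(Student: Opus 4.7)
My plan is to use Kaup's Banach--Stone theorem for JB$^*$-triples (\cite[Proposition 5.5]{Ka}), which is the key bridge: a surjective linear isometry between JB$^*$-triples is automatically a triple isomorphism, and conversely every bijective triple homomorphism is an isometry. Combined with the standard semigroup theory already cited in the introduction (\cite[Proposition 3.1.1]{BratRob1987}), the problem reduces to showing that ``being a triple homomorphism'' corresponds, infinitesimally, to the triple Leibniz rule.

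For the first assertion, by uniform continuity at the origin there exists $\delta\in B(E)$ with $U_t=e^{t\delta}$ for all $t\in\mathbb{R}$. Since each $U_t$ is a surjective isometry, \cite[Proposition 5.5]{Ka} gives
\[
U_t\{a,b,c\}=\{U_t(a),U_t(b),U_t(c)\}\qquad (a,b,c\in E,\ t\in\mathbb{R}).
\]
The triple product is jointly continuous (real-)trilinear $E\times E\times E\to E$, so I would differentiate both sides at $t=0$; on the left we obtain $\delta\{a,b,c\}$, while the product rule on the right produces
\[
\{\delta(a),b,c\}+\{a,\delta(b),c\}+\{a,b,\delta(c)\}.
\]
The conjugate linearity in the central variable causes no trouble because we differentiate with respect to a real parameter and $\delta$ is complex linear. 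This yields the triple Leibniz rule, so $\delta$ is a triple derivation.

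For the converse, let $\delta$ be a triple derivation on $E$. An induction on $n$ using the triple Leibniz rule gives the trinomial formula
\[
\delta^{n}\{a,b,c\}=\sum_{i+j+k=n}\frac{n!}{i!\,j!\,k!}\,\{\delta^{i}(a),\delta^{j}(b),\delta^{k}(c)\},
\]
for all $a,b,c\in E$ and $n\in\mathbb{N}\cup\{0\}$. Multiplying by $t^{n}/n!$ and summing in $n$ (the series converge absolutely in norm since $\delta$ is bounded), Cauchy's product formula for three absolutely convergent series gives
\[
e^{t\delta}\{a,b,c\}=\{e^{t\delta}(a),e^{t\delta}(b),e^{t\delta}(c)\},
\]
so $e^{t\delta}$ is a triple homomorphism. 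It is a linear bijection with inverse $e^{-t\delta}$, hence a triple automorphism, and by \cite[Proposition 5.5]{Ka} a surjective linear isometry. The semigroup property $e^{(t+s)\delta}=e^{t\delta}e^{s\delta}$ is the standard exponential identity for bounded operators.

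The only step that requires care is the passage from the identity satisfied by $U_t$ to its derivative at $t=0$: one must justify differentiating a trilinear map composed with three norm-differentiable curves. I expect to handle this by the uniform bound $\|U_t\|=1$ and the norm estimate $\|U_t(x)-x-t\delta(x)\|=o(t)$, together with continuity of the triple product, yielding the product rule via the standard $\varepsilon$--$\delta$ argument; this is the only analytic subtlety, and everything else is algebraic manipulation or a direct appeal to \cite[Proposition 5.5]{Ka}.
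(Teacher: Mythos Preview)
Your argument for the first assertion is exactly the one given in the paper: exponentiate a bounded generator, invoke \cite[Proposition 5.5]{Ka} to get the triple-homomorphism identity for each $U_t$, and differentiate at $t=0$ to obtain the triple Leibniz rule.

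For the converse, however, you take a genuinely different route from the paper. The paper argues via dissipativity: it quotes \cite[Theorem 2.1]{BarFri90} to the effect that every triple derivation is dissipative, and then \cite[Corollary 10.13]{BonsDun73} gives $\|e^{t\delta}\|\le 1$ for $t\ge 0$; since $-\delta$ is again a triple derivation, the same bound holds for all $t\in\mathbb{R}$, forcing $e^{t\delta}$ to be a surjective isometry (and hence a triple automorphism by Kaup). Your approach instead establishes directly that $e^{t\delta}$ is a triple homomorphism, via the trinomial expansion of $\delta^{n}\{a,b,c\}$ and a Cauchy product of absolutely convergent series, and then invokes the reverse direction of Kaup's theorem to conclude isometry. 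Both arguments are valid. Yours is more self-contained and purely algebraic---it avoids the external dissipativity machinery---while the paper's is shorter and leverages known structural facts about triple derivations. Note that the automatic continuity of $\delta$ (which you need for absolute convergence) is precisely \cite[Corollary 2.2]{BarFri90}, already recorded in the paper just before this lemma.
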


\begin{proof} Let us find $R\in B(E)$ such that $T_t = e^{t R}$ for all $t\in \mathbb{R}$. Since every surjective isometry on $E$ is a triple isomorphism \cite[Proposition 5.5]{Ka}, for each real $t$ the mapping $T_t$ satisfies $$e^{t R} \{a,b,c \} =T_t \{a,b,c \} = \{ T_t(a), T_t(b), T_t(c) \} = \{e^{t R}(a),e^{t R}(b), e^{t R}(c)\} \ \ (\forall t\in \mathbb{R}).$$ Taking derivatives at $t=0$ we get $$R \{a,b,c \} = \{R(a),b,c \} +\{a,R(b),c \} + \{a,b,R(c) \}, \hbox{ for all } a,b,c\in E,$$ which proves that $R= \delta$ is the desired triple derivation.\smallskip

For the last statement let us observe that every triple derivation $\delta$ on $E$ is a dissipative mapping (cf. \cite[Theorem 2.1]{BarFri90}). By \cite[Corollary 10.13]{BonsDun73} $\|e^{t \delta}\|\leq 1$ for all $t\in \mathbb{R}^+_0$. Since $-\delta$ also is a triple derivation on $E$, the just quoted result implies that $\|e^{t \delta}\|\leq 1$ for all $t\in \mathbb{R}$, and thus $e^{t \delta}$ is a surjective isometry (equivalently, a triple automorphism) on $E$ for all $t\in \mathbb{R}$.
\end{proof}

Let $\mathcal{B}$ be a JB$^*$-algebra. A linear mapping $D : \mathcal{B} \to \mathcal{B}$ is said to be a \emph{Jordan derivation} if $D(a \circ b) = D(a) \circ b + a \circ D(b)$, for every $a,b$ in $\mathcal{B}$. A Jordan $^*$-derivation on $\mathcal{B}$ is a Jordan derivation $D$ satisfying $D(a^*) = D(a)^*$ for all $a\in \mathcal{B}$. If $1$ is a unit in $\mathcal{B}$, we have $D(1) =0$. Every Jordan derivation on a JB$^*$-algebra is automatically continuous (see \cite[Corollary 2.3]{HejNik96}). 
Actually the results in \cite[Lemmata 1 and 2]{HoMarPeRu} (see also \cite[Proposition 3.7]{HoPeRu}) show that a linear mapping $\delta$ on a unital JB$^*$-algebra $\mathcal{B}$ is a triple derivation if and only if $\delta(1)^* = -\delta(1)$ and $\delta-\delta(\frac12 \delta(1), 1)$ is a Jordan $^*$-derivation on $\mathcal{B}$ (where $\delta(\frac12 \delta(1), 1) (x) = \delta(1)\circ x$ for all $x\in \mathcal{B}$), that is, $\delta$ is a triple derivation if and only if $\delta$ is the sum of a Jordan $^*$-derivation and a Jordan multiplication operator by a skew symmetric element in $\mathcal{B}$.\smallskip

We can continue with a Jordan version of one of the statements in \cite[Remark 2]{GarPeUnitCstaralg20}.

\begin{lemma}\label{l Jordan *-derivations vanish on the center} Let $d:\mathcal{A}\to \mathcal{A}$ be a Jordan $^*$-derivation on a JB$^*$-algebra. Then $d$ vanishes on the center of $\mathcal{A},$ and consequently $d$ commutes with $M_z$ for every $z\in Z(\mathcal{A})$.
\end{lemma}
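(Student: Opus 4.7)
The plan is threefold: show that $d$ restricts to an operator on $Z(\mathcal{A})$, identify that restriction as a continuous $*$-derivation of a commutative C$^*$-algebra, and kill it by a classical result. To carry out the first step, I would fix $z\in Z(\mathcal{A})$ and arbitrary $x,y\in\mathcal{A}$ and apply $d$ to both sides of the operator-commutativity identity $(z\circ x)\circ y=z\circ(x\circ y)$. Expanding by the Jordan Leibniz rule produces six summands, four of which cancel pairwise once one uses that $z$ operator commutes with each of $x$, $y$, $d(x)$ and $d(y)$. What remains is $(d(z)\circ x)\circ y=d(z)\circ(x\circ y)$ for all $x,y\in\mathcal{A}$, so $d(z)\in Z(\mathcal{A})$.

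Second, I would observe that on $Z(\mathcal{A})$ the Jordan product is both commutative and (by centrality of each argument) associative, and that $Z(\mathcal{A})$ is closed under the involution; indeed, applying $*$ to the identity $(z\circ x^{*})\circ y^{*}=z\circ (x^{*}\circ y^{*})$ shows that $z^{*}$ operator commutes with any $x,y$. Under the norm and involution inherited from $\mathcal{A}$, $(Z(\mathcal{A}),\circ,*)$ is therefore a commutative C$^*$-algebra, and the previous step tells us that $d|_{Z(\mathcal{A})}$ is a $*$-derivation of this C$^*$-algebra into itself; its continuity is guaranteed by \cite[Corollary 2.3]{HejNik96}.

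Finally, the Singer--Wermer theorem asserts that every continuous derivation of a commutative Banach algebra takes values in the Jacobson radical; since commutative C$^*$-algebras are semisimple, this forces $d|_{Z(\mathcal{A})}=0$. The last clause then follows instantly from Leibniz: for $z\in Z(\mathcal{A})$ and $x\in\mathcal{A}$,
\[
d(M_{z}(x))=d(z\circ x)=d(z)\circ x+z\circ d(x)=z\circ d(x)=M_{z}(d(x)),
\]
so $d$ commutes with $M_{z}$. The only nontrivial ingredient is Singer--Wermer, and this is the step where I expect the real content to sit; the rest is bookkeeping with the Jordan identities. If one prefers a fully self-contained route, one may instead pass to the Gelfand representation $Z(\mathcal{A})\cong C_{0}(X)$ and observe that a positive element $f\in C_{0}(X)$ vanishing at $x_{0}$ lies in $I_{x_{0}}^{2}$ via $f=f^{1/2}\circ f^{1/2}$, so every continuous point derivation at a character of $C_{0}(X)$ is zero on positive elements of $I_{x_{0}}$ and hence, after decomposing into four positive pieces, on all of $I_{x_{0}}$.
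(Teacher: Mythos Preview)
Your proof is correct and takes a genuinely different route from the paper's. The paper never shows that $d$ preserves $Z(\mathcal{A})$; instead it works entirely inside Jordan theory, using Upmeier's Approximation Theorem (or, alternatively, the analogous approximation by inner \emph{triple} derivations from \cite{HoMarPeRu}) to approximate $d|_{\mathcal{A}_{sa}}$ in the strong operator topology by finite sums of commutators $[M_{a},M_{b}]$, and then checks directly that each $[M_{a},M_{b}]$ kills central elements. Your argument, by contrast, first establishes $d(Z(\mathcal{A}))\subseteq Z(\mathcal{A})$ via the Leibniz computation, recognises $Z(\mathcal{A})$ as a commutative C$^{*}$-algebra, and then appeals to Singer--Wermer (or the equivalent observation that $C_{0}(X)$ has no nonzero continuous point derivations). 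The final deduction of $dM_{z}=M_{z}d$ from $d(z)=0$ via Leibniz is identical in both.

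What each approach buys: yours is more elementary in the sense that it replaces a structural approximation theorem for Jordan derivations by a classical commutative Banach-algebra fact, and the invariance step $(d(z)\circ x)\circ y=d(z)\circ(x\circ y)$ is a clean self-contained computation. The paper's approach avoids having to identify $Z(\mathcal{A})$ as an associative C$^{*}$-algebra and keeps everything internal to the Jordan setting, at the cost of importing the (nontrivial) approximation theorems from \cite{Upmeier0} or \cite{HoMarPeRu}. One small remark: your ``four positive pieces'' decomposition in the alternative Gelfand argument needs the observation that for real $g\in I_{x_{0}}$ the canonical decomposition $g=g_{+}-g_{-}$ already has $g_{\pm}\in I_{x_{0}}$; this is immediate since $g_{+}g_{-}=0$ pointwise and $g(x_{0})=0$.
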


\begin{proof} We have already justified in the comments prior to this lemma that $d$ is a triple derivation on $\mathcal{A}$. Therefore $d|_{\mathcal{A}_{sa}} : \mathcal{A}_{sa}\to \mathcal{A}_{sa}$ is a Jordan derivation on the JB-algebra $\mathcal{A}_{sa}$. Since the center of $\mathcal{A}$ is $^*$-invariant it suffices to prove that $d$ vanishes on every element in $Z(\mathcal{A})_{sa}= Z(\mathcal{A}_{sa}) = Z(\mathcal{A})\cap \mathcal{A}_{sa}.$\smallskip

The Approximation Theorem \cite[Approximation Theorem 4.2]{Upmeier0} guarantees that every Jordan derivation on a JB$^*$-algebra can be approximated in the strong operator topology by inner derivations (i.e. by derivations which are finite sums of maps of the form $x\mapsto [M_a,M_b] (x) = (M_a M_b - M_b M_a)(x)$). Fix $z\in Z(\mathcal{A})_{sa}$ and an arbitrary $\varepsilon>0$. It follows from the above result that there exist $a_1,b_1,\ldots,a_m,b_m\in \mathcal{A}_{sa}$ satisfying $\displaystyle \left\| d(z) - \sum_{j=1}^m [M_{a_j},M_{b_j}] (z)\right\|<\varepsilon.$ Having in mind that $z$ is central we obtain $[M_{a_j},M_{b_j}] (z) = a_j\circ (b_j\circ z) -  b_j\circ (a_j\circ z)= z\circ (b_j\circ a_j) -  z\circ (a_j\circ b_j) =0,$ for all $j$. It then follows that $\left\| d(z) \right\|<\varepsilon,$ and the arbitrariness of $\varepsilon>0$ implies that $d(z) =0$.\smallskip

An alternative proof can be obtained as follows: The hermitian part of $\mathcal{A}$ is a real JB$^*$-triple in the sense employed in \cite{HoMarPeRu} and $d|_{\mathcal{A}_{sa}} : \mathcal{A}_{sa}\to \mathcal{A}_{sa}$ is a triple derivation. By the Jordan identity, a typical example of a triple derivation on $\mathcal{A}_{sa},$ regarded as a real JB$^*$-triple, is one given by $\delta(a,b) (x) = L(a,b) (x) -L(b,a)(x) =\{a,b,x\}- \{b,a,x\}$ ($x\in \mathcal{A}_{sa}$), where $a,b$ are fixed elements in $\mathcal{A}_{sa}$. Let us pick $z\in Z(\mathcal{A}_{sa})$. It is easy to check that $$ \begin{aligned} \delta(a,b) (z) &=\{a,b,z\}- \{b,a,z\}=(a\circ b) \circ z + (z\circ b)\circ a  - (a\circ z)\circ b \\
&- (b\circ a) \circ z - (z\circ a)\circ b + (b\circ z)\circ a\\
&= 2  (z\circ b)\circ a  - 2 (a\circ z)\circ b = 2 (a\circ b) \circ z - 2 (b\circ a) \circ z =0.
\end{aligned}$$

Those triple derivations on a real JB$^*$-triple which are expressed as finite sums of triple derivation of the form $\delta(a,b)$ are called inner derivations. Theorem 5 in \cite{HoMarPeRu} proves that every triple derivation on a real JB$^*$-triple can be approximated by inner derivations with respect to the strong operator topology. Therefore, given $z\in Z(\mathcal{A}_{sa})$ and $\varepsilon>0$ there exist $a_1,\ldots,a_m$, $b_1,\ldots,b_m$ in $\mathcal{A}_{sa}$ such that $\displaystyle \varepsilon > \left\| d (z) - \sum_{j=1}^m \delta(a_j,b_j) (z) \right\| = \left\| d (z) \right\|.$ The arbitrariness of $\varepsilon>0$ assures that $d(z)=0$, as desired.\smallskip

Finally for $z\in Z(\mathcal{A})$ we have $$d M_z (x) = d (z\circ x) = d(z) \circ x + z\circ d(x) = M_z d(x), \hbox{ for all } x\in \mathcal{A}.$$
\end{proof}

We recall next the definition and basic properties of the strong$^*$ topology for general JBW$^*$-triples.
Let us suppose that $\varphi$ is a norm one functional in the predual, $M_*$, of a JBW$^*$-triple $M.$ If $z$ is any norm one element in $W$ with $\varphi (z) =1$, Proposition 1.2 in \cite{barton1987grothendieck} proves that the mapping
$$(x,y)\mapsto \varphi\J xyz$$ is a positive sesquilinear
form on $M,$ which does not depend on the choice of $z$. We find in this way prehilbertian
seminorms on $M$ given by $\|x\|_{\varphi}^2:= \varphi\J xxz,$ ($x\in M$). The \emph{strong*-topology} of $M$ is the topology  generated by the family $\{
\|\cdot\|_{\varphi}:\varphi\in {M_*}, \|\varphi \| =1 \}$ (cf.
\cite{BarFri90}). As in the setting of von Neumann algebras, the triple product of every JBW$^*$-triple is jointly strong$^*$-continuous on bounded sets (see \cite[Theorem]{RodPa91} and \cite[\S 4 and Theorem 9]{PeRo2001}). It is known that a linear map between JBW$^*$-triples is strong$^*$ continuous if and only if it is weak$^*$ continuous (cf. \cite[Corollary 3]{RodPa91} and \cite[page 621]{PeRo2001}).\smallskip

Let us go back to the triple spectrum. For each non-zero element $a$ in a JB$^*$-triple $E$ we set $m_q (a) := \min \{\lambda : \lambda\in \Omega_a\}$, where $\Omega_a$ denotes the triple spectrum of $a$. We set $m_q (0) =0$. The mapping $m_q : E\to \mathbb{R}_0^{+}$, $a\mapsto m_q(a)$ has been considered in \cite{JamPeSiddTah2015} in the study of the $\lambda$-function in the case of JBW$^*$-triples. One of the consequences of Theorem 3.1 in the just mentioned reference implies that \begin{equation}\label{eq mq is 1Lipschitz} |m_q (a) -m_q (b)| \leq \|a-b\|, \hbox{ for all } a,b\in E,
\end{equation} (cf. \cite[Theorem 3.1 and $(3.2)$]{JamPeSiddTah2015}).\smallskip

Let $z$ be an element in a JB$^*$-triple $E$. Back to the local Gelfand theory, we consider the JB$^*$-subtriples $E_z$ and $E_{z^{[3]}}$ generated by $z$ and $z^{[3]}$, respectively. It is known that $E_z = E_{z^{[3]}}$ (cf. \cite[comments before Proposition 2.1]{BunChuZal2000}). The following property holds:
\begin{equation}\label{eq uniqueness of cubic root}\hbox{ $z^{[3]} = a$ for some $a$ in $E$ implies that $z=a^{[\frac13]}\in E_a$.}
\end{equation} Namely, clearly $a\in E_z$ and thus $E_a \subseteq E_z$. On the other hand $z^{[3]}\in E_a$ and hence $E_z = E_{z^{[3]}} \subseteq E_a \subseteq E_z.$ The local Gelfand theory gives the statement.\smallskip

It is now the moment to describe the uniformly continuous one-parameter semigroups of orthogonality preserving operators on a general JB$^*$-algebra.

\begin{theorem}\label{t Wolff one-parameter for OP JBstar} Let $\mathcal{A}$ be a JB$^*$-algebra. Suppose $\{T_t: t\in \mathbb{R}_0^{+}\}$ is a family of orthogonality preserving bounded linear bijections on $\mathcal{A}$ with $T_0=Id$. For each $t\geq 0$ let $h_t = T_t^{**} (1)$, let $r_t$ be the range tripotent of $h_t$ in $\mathcal{A}^{**}$ and let $S_t: \mathcal{A} \to (\mathcal{A},\circ_{r_t},*_{r_t})$ denote the Jordan $^*$-isomorphism associated with $T_t$ given by Corollary \ref{c Characterization bd OP plus bijective Jordan}. Then the following statements are equivalent:\begin{enumerate}[$(a)$]\item $\{T_t: t\in \mathbb{R}_0^{+}\}$ is a uniformly continuous one-parameter semigroup of orthogonality preserving operators on $\mathcal{A}$;
\item $\{S_t: t\in \mathbb{R}_0^{+}\}$ is a uniformly continuous one-parameter semigroup of surjective linear isometries {\rm(}i.e. triple isomorphisms{\rm)} on $\mathcal{A}$ {\rm(}and hence there exists a triple derivation $\delta$ on $\mathcal{A}$ such that $S_t = e^{t \delta}$ for all $t\in \mathbb{R}${\rm)}, the mapping $t\mapsto h_t $ is continuous at zero, and the identity \begin{equation}\label{eq new idenity in the statement of theorem 1 on one-parameter Jordan} h_{t+s} =  h_t \circ_{r_t} S_t^{**} (h_s)= \{ h_t , {r_t},  S_t^{**} (h_s) \},
    \end{equation} holds for all $s,t\in \mathbb{R}.$
\end{enumerate}
\end{theorem}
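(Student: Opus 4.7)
The plan is to prove (a) $\Leftrightarrow$ (b) by leveraging the decomposition $T_t(x) = \{h_t, r_t, S_t(x)\} = h_t \circ_{r_t} S_t(x)$ from Corollary \ref{c Characterization bd OP plus bijective Jordan} together with its bitranspose extension $T_t^{**}(y) = \{h_t, r_t, S_t^{**}(y)\}$ obtained in Proposition \ref{p surjective OP preserves multipliers JB}(f).

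For (a) $\Rightarrow$ (b), the key identity $h_{t+s} = \{h_t, r_t, S_t^{**}(h_s)\}$ is obtained by applying $T_{t+s}^{**} = T_t^{**} T_s^{**}$ to $1$ and expanding via the bitranspose decomposition. Continuity of $t \mapsto h_t$ at zero is immediate from $\|h_t - 1\| \le \|T_t - I\|$. To extract the semigroup property $S_{t+s} = S_t S_s$, I invert the decomposition: since $h_t$ is central and invertible in $(\mathcal{A}^{**}, \circ_{r_t}, *_{r_t})$ with inverse in $M(\mathcal{A})$, one has $S_t(x) = \{h_t^{-1}, r_t, T_t(x)\}$. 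Writing $T_t T_s(x) = \{h_t, r_t, S_t(\{h_s, r_s, S_s(x)\})\}$, applying the triple-homomorphism property of $S_t$, and collapsing the resulting nested bracket via the Jordan identity together with the operator commutativity of $h_t$ with $S_t^{**}(h_s)$ yields $T_t T_s(x) = \{h_{t+s}, r_{t+s}, S_t S_s(x)\}$; injectivity of the multiplication by $h_{t+s}$ in the $r_{t+s}$-algebra then forces $S_{t+s} = S_t S_s$.

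Uniform continuity of $S_t$ at zero follows from the formula $S_t(x) = \{h_t^{-1}, r_t, T_t(x)\}$: for small $t$ the Lipschitz estimate \eqref{eq mq is 1Lipschitz} for $m_q$ together with local triple functional calculus ensures that $h_t^{-1}$ and $r_t$ depend norm-continuously on $h_t$ in a neighborhood of $1$, so combining with uniform continuity of $T_t$ gives $\|S_t - I\| \to 0$; Lemma \ref{l -parameter group of iso on a JB*-triple} then produces the triple derivation $\delta$ with $S_t = e^{t \delta}$. For (b) $\Rightarrow$ (a), uniform continuity of $T_t$ follows immediately from the decomposition and the hypothesized continuities of $S_t$ and $h_t$ (again $r_t$ being a continuous function of $h_t$ near $1$), and the semigroup law $T_t T_s = T_{t+s}$ is verified by reversing the nested-product computation above, invoking the hypothesized identity for $h_{t+s}$ and the semigroup law of $S_t$.

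The main obstacle is the nested-triple-product reduction $\{h_t, r_t, \{S_t^{**}(h_s), S_t^{**}(r_s), S_t S_s(x)\}\} \to \{h_{t+s}, r_{t+s}, S_{t+s}(x)\}$. This requires tracking $S_t^{**}(r_s)$ (which, since $S_t^{**}$ is a triple isomorphism and the range tripotent is determined by the weak$^*$-limit of cubic roots, equals the range tripotent of $S_t^{**}(h_s)$), identifying the range tripotent of $h_t \circ_{r_t} S_t^{**}(h_s)$ as $r_{t+s}$ from the hypothesized identity, and commuting $M^{r_t}_{h_t}$ past the inner triple bracket. The cleanest route uses Lemma \ref{l technical centroid with two different products}(b), which states that $M^{r_t}_{h_t}$ is a centroid element of $\mathcal{A}^{**}$ as a JB$^*$-triple, hence satisfies $M^{r_t}_{h_t}\{a,b,c\} = \{M^{r_t}_{h_t}(a), b, c\}$; this allows the outer multiplication by $h_t$ to be absorbed into the inner bracket, producing exactly $h_{t+s}$ in the first slot and leaving the tripotent and $S_{t+s}(x)$ in the remaining slots.
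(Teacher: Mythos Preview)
Your approach is essentially the paper's: both directions run through the decomposition $T_t = M_{h_t}^{r_t} S_t$, the centroid property of $M_{h_t}^{r_t}$ from Lemma~\ref{l technical centroid with two different products}$(b)$, and the continuity of $t\mapsto r_t$ obtained via the $m_q$-Lipschitz estimate \eqref{eq mq is 1Lipschitz} together with triple functional calculus.

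The one step that is thinner in your sketch than in the paper is the identity $S_t^{**}(r_s)=r_{t+s}$. You correctly observe that $S_t^{**}(r_s)$ is the range tripotent of $S_t^{**}(h_s)$ (since $S_t^{**}$ is a weak$^*$-continuous triple automorphism) and that $r_{t+s}$ is by definition the range tripotent of $h_{t+s}=h_t\circ_{r_t}S_t^{**}(h_s)$, but you do not say why these two range tripotents coincide, i.e.\ why multiplication by the central positive invertible element $h_t$ in the $r_t$-algebra leaves the range tripotent of $S_t^{**}(h_s)$ unchanged. Without this, your nested-bracket reduction only yields $\{h_{t+s},\,S_t^{**}(r_s),\,S_tS_s(x)\}$, which cannot yet be matched against $\{h_{t+s},\,r_{t+s},\,S_{t+s}(x)\}$ so as to cancel $M_{h_{t+s}}^{r_{t+s}}$ and conclude $S_{t+s}=S_tS_s$. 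The paper closes this gap by first proving, via induction and the uniqueness of cubic roots \eqref{eq uniqueness of cubic root}, that $h_{t+s}^{[1/3^n]}=h_t^{[1/3^n]}\circ_{r_t}S_t^{**}\bigl(h_s^{[1/3^n]}\bigr)$ for every $n$ (the point being that $M_c^{r_t}$ is a self-adjoint centroid element whenever $c$ is central and positive in the $r_t$-algebra, whence $(M_c^{r_t}z)^{[3]}=M_{c^{[3]}}^{r_t}z^{[3]}$), and then passing to the strong$^*$ limit using the joint strong$^*$-continuity of the triple product on bounded sets. This identity is used in \emph{both} implications, so it is worth isolating explicitly before you begin either direction.
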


\begin{proof} Let us begin with a common property employed in both implications. We claim that the identity in \eqref{eq new idenity in the statement of theorem 1 on one-parameter Jordan} 
implies that \begin{equation}\label{eq 30 for ranges} r_{t+s} =  r_t \circ_{r_t} S_t^{**} (r_s)= S_t^{**} (r_s), \hbox{ for all } s,t\in \mathbb{R}.
\end{equation} Indeed, the elements
$h_{t}, h_{t}^{[\frac{1}{3^n}]}\in Z(\mathcal{A}^{**},\circ_{r_{t}},*_{r_{t}})$ and $S_t : \mathcal{A}\to (\mathcal{A},\circ_{r_t}, *_{r_t})$ is a Jordan $^*$-isomorphism, this can be applied to deduce that the identity \begin{equation}\label{eq 3n cubic roots h last theorem} h_{t+s}^{[\frac{1}{3^n}]} = h_{t}^{[\frac{1}{3^n}]} \circ_{r_t} S_t^{**}(h_s^{[\frac{1}{3^n}]}),
\end{equation} holds for all $n\in\mathbb{N}$. Let us briefly convince the reader. The uniqueness of the triple product (cf. \cite[Proposition 5.5]{Ka}) proves that $$\{x,y,z\} = (x \circ_{r_t} y^{*_{r_t}}) \circ_{r_t} z + (z \circ_{r_t} y^{*_{r_t}}) \circ_{r_t} x - (x \circ_{r_t} z) \circ_{r_t} y^{*_{r_t}}, \hbox{ for all } x,y,z\in \mathcal{A},$$ which combined with the commuting properties of $h_{t}$ and $h_{t}^{[\frac{1}{3^n}]}$ in $(\mathcal{A},\circ_{r_t}, *_{r_t})$ assures that  $$ \left(h_{t}^{[\frac{1}{3^{n+1}}]} \circ_{r_t} S_t^{**}(h_s^{[\frac{1}{3^{n+1}}]})\right)^{[3]} = \left(h_{t}^{[\frac{1}{3^{n+1}}]}\right)^{[3]} \circ_{r_t} S_t^{**}\left(h_s^{[\frac{1}{3^{n+1}}]}\right)^{[3]}   $$
$$ = h_{t}^{[\frac{1}{3^{n}}]} \circ_{r_t} S_t^{**}\left( \left(h_s^{[\frac{1}{3^{n+1}}]}\right)^{[3]}\right) = h_{t}^{[\frac{1}{3^{n}}]} \circ_{r_t} S_t^{**}\left(h_s^{[\frac{1}{3^{n}}]} \right) = h_{t+s}^{[\frac{1}{3^n}]},$$ where in the last equality we applied the induction hypothesis. The discussion in \eqref{eq uniqueness of cubic root} proves that $h_{t}^{[\frac{1}{3^{n+1}}]} \circ_{r_t} S_t^{**}(h_s^{[\frac{1}{3^{n+1}}]}) =  h_{t+s}^{[\frac{1}{3^{n+1}}]},$ which concludes the induction argument leading to \eqref{eq 3n cubic roots h last theorem}.\smallskip

Now, since $(h_{t+s}^{[\frac{1}{3^n}]})_n\to r_{t+s}$, $(h_{t}^{[\frac{1}{3^n}]})_n\to r_{t}$ and $(h_{s}^{[\frac{1}{3^n}]})_n\to r_{s}$ in the strong$^*$ topology of $\mathcal{A}^{**},$ $S_t^{**}$ is strong$^*$ continuous and the triple product of every JBW$^*$-triple is jointly strong$^*$ continuous on bounded sets (cf. \cite[Theorem]{RodPa91} and \cite[\S 4 and Theorem 9]{PeRo2001}), by taking strong$^*$ limits in \eqref{eq 3n cubic roots h last theorem} we get $r_{t+s} = r_t \circ_{r_t} S_t^{**} (r_s) = S_t^{**}(r_s),$ for all $s,t\in \mathbb{R}$, which concludes the proof of \eqref{eq 30 for ranges}. \smallskip

If we apply that $S_t : \mathcal{A}\to (\mathcal{A},\circ_{r_t}, *_{r_t})$ is a Jordan $^*$-isomorphism we also derive \begin{equation}\label{eq rt+s last theorem} r_{t+s}^{*_{r_t}}= S_t^{**}(r_s)^{*_{r_{t}}} = S_t^{**} (r_s^*), \hbox{ for all } s,t\in \mathbb{R}.
\end{equation}

$(a)\Rightarrow (b)$ It follows from the assumptions that $$h_{s+t} = T^{**}_{s+t} (1) = T^{**}_{t} (T^{**}_{s}(1)) = T^{**}_{t} (h_s) = h_t\circ_{r_t} S_t^{**} (h_s) =U^{t}_{h_t^{\frac12}} S_t^{**} (h_s),$$ for all $s,t\in\mathbb{R}$, where $U^{t}$ stands for the $U$ operator in the JB$^*$-algebra $(\mathcal{A},\circ_{r_t}, *_{r_t})$. We can apply \eqref{eq 30 for ranges} and \eqref{eq rt+s last theorem} to deduce $r_{t+s} = S_t^{**} (r_s),$ and $r_{t+s}^{*_{r_t}}= S_t^{**}(r_s)^{*_{r_{t}}} = S_t^{**} (r_s^*),$ for all $s,t\in \mathbb{R}$.\smallskip

It follows from the above conclusions that \begin{equation}\label{eq St Jordan *-isom last theorem} S_t: (\mathcal{A}, \circ_{r_s}, *_{r_{s}}) \to (\mathcal{A}, \circ_{r_{t+s}}, *_{r_{t+s}})
 \end{equation} is a (unital and isometric) triple isomorphism, and hence a Jordan $^*$-isomorphism.\smallskip

We know from Corollary \ref{c Characterization bd OP plus bijective Jordan} that each $h_t$ is a positive invertible element in $Z(\mathcal{A}, \circ_{r_t}, *_{r_{t}})$. Therefore, the mapping $M_{h_t}^{t} (x) := h_t\circ_{r_t} x$ is invertible in $B(\mathcal{A})$. If we fix an arbitrary $a\in \mathcal{A}$, we deduce from the hypotheses that
\begin{equation}\label{eq one 08032020} M_{h_{t+s}}^{t+s} S_{t+s} (a) = T_{t+s} (a) = T_t T_s (a) = M_{h_t}^{t} S_{t} M_{h_s}^s S_{s} (a)
\end{equation}
$$ = M_{h_t}^{t} S_{t} ({h_s}\circ_{r_s} S_{s} (a)) = M_{h_t}^{t} (S_t^{**} (h_s)\circ_{r_{t+s}} S_t S_s(a)),$$ where in the last equality we applied \eqref{eq St Jordan *-isom last theorem}. We focus next on the left-hand-side term in the first row and we expand it to get
\begin{equation}\label{eq two 08032020} M_{h_{t+s}}^{t+s} S_{t+s} (a) = \{h_{t+s}, r_{t+s}, S_{t+s} (a) \} = \hbox{(by \eqref{eq new idenity in the statement of theorem 1 on one-parameter Jordan})}
\end{equation}
$$= \{h_t \circ_{r_t} S_t^{**} (h_s), r_{t+s}, S_{t+s} (a) \}  = \{M_{h_t}^{t} S_t^{**} (h_s), r_{t+s}, S_{t+s} (a) \} $$ $$= M_{h_t}^{t} \{S_t^{**} (h_s), r_{t+s}, S_{t+s} (a) \}=  M_{h_t}^{t} ( S_t^{**} (h_s) \circ_{r_{t+s}} S_{t+s} (a)),$$
where in the penultimate step we applied Lemma \ref{l technical centroid with two different products}$(b)$ and the fact that $h_t$ belongs to $Z(\mathcal{A}, \circ_{r_t}, *_{r_{t}})$.\smallskip

The mapping $M_{h_t}^{t}$ is invertible in $B(\mathcal{A})$, we deduce from \eqref{eq one 08032020} and \eqref{eq two 08032020} that
\begin{equation}\label{eq three 08032020} M_{S_t^{**} (h_s)}^{t+s} S_{t+s} (a)  =  S_t^{**} (h_s) \circ_{r_{t+s}} S_{t+s} (a)
\end{equation} $$= S_t^{**} (h_s)\circ_{r_{t+s}} S_t S_s(a) =M_{S_t^{**} (h_s)}^{t+s} S_t S_s(a).$$

Since, by \eqref{eq St Jordan *-isom last theorem}, $S_t: (\mathcal{A}, \circ_{r_s}, *_{r_{s}}) \to (\mathcal{A}, \circ_{r_{t+s}}, *_{r_{t+s}})$ is a (unital) Jordan $^*$-isomorphism, and $h_s$ is positive, central and invertible in $(\mathcal{A}, \circ_{r_s}, *_{r_{s}})$, the element $S_t^{**} (h_s)$ is positive, central and invertible in $(\mathcal{A}, \circ_{r_{t+s}}, *_{r_{t+s}})$, and thus the mapping $M_{S_t^{**} (h_s)}^{t+s}$ is invertible in $B(\mathcal{A})$. It follows from \eqref{eq three 08032020} that $S_{t+s} (a)  = S_t S_s(a).$\smallskip

We have therefore shown that $\{S_s: s\in \mathbb{R}_0^{+}\}$ is a one-parameter semigroup of surjective linear isometries (i.e. triple isomorphisms) on $\mathcal{A}$. It only remains to show that it is  uniformly continuous. The uniform continuity of the semigroup $\{T_s: s\in \mathbb{R}\}$ proves that the mapping $t\mapsto h_s = T_s^{**} (1) $ is continuous at zero. \smallskip

For each real $s$, the element $h_s$ is positive, central and invertible in the JB$^*$-algebra $(\mathcal{A}, \circ_{r_s}, *_{r_{s}})$, and thus $m_q (h_s)>0$ for all $s\in \mathbb{R}$. Since $h_0 =1$, we can deduce from \eqref{eq mq is 1Lipschitz} the existence of $\rho>0$ and $0<\theta_1\leq \theta_2$ in $\mathbb{R}$ such that $\Omega_{h_s} \subseteq [\theta_1, \theta_2],$ equivalently $\theta_1\leq m_q (h_s) \leq \theta_2,$ for all $|s|<\rho$. In particular, for each natural $n$, the mapping $s\mapsto h_s^{[2 n-1]}$ is continuous at zero. Consequently, por each odd polynomial with zero constant term $p(\lambda)$, the mapping $s\mapsto p_{t}(h_s)$ also is continuous at zero (where we employ the triple polynomial calculus). Fix a natural $m$. By the Stone-Weierstrass theorem the function $g_m: [\theta_1, \theta_2] \to \mathbb{R}$, $g_m(\lambda )= \lambda^{\frac{1}{3^m}}$ can be uniformly approximated by an odd polynomial with zero constant term. By combining the previous facts we prove that the mapping $s\mapsto (g_m)_t(h_s) = h_{s}^{[\frac{1}{3^m}]}$ is continuous at zero (for all $m\in \mathbb{N}$).  Since the sequence $(g_m)_m$ converges uniformly to the unit element $\textbf{1}$ in $C[\theta_1, \theta_2],$ and $\textbf{1}_t (h_s) = r(h_s)$ for all $|s|<\rho$, it can be easily checked that the mapping $s\mapsto r(h_{s}) = r_s$ is continuous at zero.\smallskip

We can therefore conclude that the mapping $s\mapsto L(h_s,r_s) = M_{h_s}^{s}$ must be continuous at zero, where $M_{h_s}^{s}$ is an invertible element in $B(\mathcal{A})$. Having in mind that $S_{s} = \left(M_{h_s}^{s}\right)^{-1} T_{s}$ ($s\in \mathbb{R}_0^{+}$), we deduce that  $\{S_s: s\in \mathbb{R}_0^{+}\}$ is uniformly continuous one-parameter semigroup of surjective linear isometries, which finishes the proof of the first implication. \smallskip

$(b)\Rightarrow(a)$  The identity in \eqref{eq new idenity in the statement of theorem 1 on one-parameter Jordan} holds by assumptions, it then follows from \eqref{eq 30 for ranges} that $r_{t+s} =  S_t^{**} (r_s)$ for all $s,t\in \mathbb{R}$. As before, this implies that $$S_t: (\mathcal{A}, \circ_{r_s}, *_{r_{s}}) \to (\mathcal{A}, \circ_{r_{t+s}}, *_{r_{t+s}})$$ is a Jordan $^*$-isomorphism. Fix an arbitrary $a\in \mathcal{A}$ to compute $$\begin{aligned} T_{t+s} (a) & = h_{t+s} \circ_{r_{t+s}} S_{t+s} (a) = \{ h_{t+s}, r_{t+s}, S_{t+s} (a)\} = \{ h_{t+s}, r_{t+s}, S_{t} S_{s} (a)\} \\
& = \{  h_t \circ_{r_t} S_t^{**} (h_s) , S_{t}^{**} (r_{s}), S_{t} S_{s} (a)\} = \{ M_{h_t}^{t} S_t^{**} (h_s) , S_{t}^{**} (r_{s}), S_{t} S_{s} (a)\} \\
&=  M_{h_t}^{t}\{ S_t^{**} (h_s) , S_{t}^{**} (r_{s}), S_{t} S_{s} (a)\} =  M_{h_t}^{t} S_t^{**} \{ h_s , r_{s}, S_{s} (a)\} \\
& =  {h_t}\circ_{r_t} S_t^{**} \left( h_s \circ_{r_{s}} S_{s} (a)\right) = T_t^{**}  T_s^{**} (a) = T_t  T_s (a),
\end{aligned},$$ where in the fourth and sixth equalities we applied \eqref{eq new idenity in the statement of theorem 1 on one-parameter Jordan} and Lemma \ref{l technical centroid with two different products}$(b)$ with $h_t\in Z(\mathcal{A}, \circ_{r_t}, *_{r_t})$, respectively. We have proved that $\{T_t: t\in \mathbb{R}_0^+\}$ is a one-parameter semigroup of orthogonality preserving operators on $\mathcal{A}$. The uniform continuity of the semigroup can be easily deduced from the corresponding property of the one-parameter semigroup $\{S_t: t\in \mathbb{R}_0^+\},$ the continuity of the mapping $t\mapsto h_t$ at zero, and the identity $T_t (\cdot) = \{h_t ,r_t , S_t (\cdot)\}$ with the same arguments we gave in the final part of the proof of $(a)\Rightarrow (b)$.
\end{proof}

As in the case of C$^*$-algebras, in the above Theorem \ref{t Wolff one-parameter for OP JBstar} the sets $\{r_t: t\in \mathbb{R}_0^+\}$ and $\{h_t: t\in \mathbb{R}_0^+\}$ need not be one-parameter semigroups for the Jordan product (cf. \cite[Remark 3]{GarPeUnitCstaralg20}). However, assuming that each $T_t$ is a symmetric mapping these sets become semigroups for the Jordan product.

\begin{corollary}\label{c Wolff one-parameter for OP JBstar symmetric} Let $\mathcal{A}$ be a JB$^*$-algebra. Suppose $\{T_t: t\in \mathbb{R}_0^{+}\}$ is a family of symmetric orthogonality preserving bounded linear bijections on $\mathcal{A}$ with $T_0=Id$. For each $t\geq 0$ let $h_t = T_t^{**} (1)$, let $r_t$ be the range tripotent of $h_t$ in $\mathcal{A}_{sa}^{**}$ and let $S_t: \mathcal{A} \to (\mathcal{A}^{**},\circ_{r_t},*_{r_t})$ denote the Jordan $^*$-isomorphism associated with $T_t$ given by Corollary \ref{c Characterization bd OP plus bijective Jordan}. The following statements are equivalent:\begin{enumerate}[$(a)$]\item $\{T_t: t\in \mathbb{R}_0^{+}\}$ is a uniformly continuous one-parameter semigroup of orthogonality preserving operators on $\mathcal{A}$;
\item $\{S_t: t\in \mathbb{R}_0^{+}\}$ is a uniformly continuous one-parameter semigroup of surjective linear isometries {\rm(}i.e. triple isomorphisms{\rm)} on $\mathcal{A}$ {\rm(}and hence there exists a triple derivation $\delta$ on $\mathcal{A}$ such that $S_t = e^{t \delta}$ for all $t\in \mathbb{R}${\rm)}, the mapping $t\mapsto h_t $ is continuous at zero, $h_t,r_t\in Z(M(\mathcal{A}))$ and the identities
    $$h_{t+s} = h_t \circ h_s, \ r_{t+s} = r_t \circ r_s, \hbox{ and }$$
    $$h_{t+s} =  h_t \circ_{r_t} S_t^{**} (h_s)= \{ h_t , {r_t},  S_t^{**} (h_s) \},$$
    hold for all $s,t\in \mathbb{R}.$
\end{enumerate}

\noindent Moreover, if any of the previous equivalent statements holds, then there exist $h\in Z(M(\mathcal{A}))$ and a Jordan $^*$-derivation $d$ on $\mathcal{A}$ such that $r_t\circ S_t = e^{t d},$ and $$T_t (a) = e^{t h}\circ (r_t\circ S_t) (a) = e^{t (M_h+ d)} (a),$$ for all $a\in A$, $t\in \mathbb{R}.$
\end{corollary}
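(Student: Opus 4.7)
My plan is to bootstrap from Theorem \ref{t Wolff one-parameter for OP JBstar} by exploiting the additional symmetry of each $T_t$ to force $r_t=1$, after which the extra identities in (b) emerge essentially for free. For $(a)\Rightarrow(b)$, I would first invoke Theorem \ref{t Wolff one-parameter for OP JBstar} to obtain the uniformly continuous semigroup $\{S_t\}$, the continuity of $t\mapsto h_t$ at zero, and the identity $h_{t+s}=h_t\circ_{r_t}S_t^{**}(h_s)$. Symmetry of $T_t$ immediately gives $h_t=h_t^*$ (applying $T_t^{**}$ to $1$) and therefore $r_t=r_t^*$, which combined with $r_t$ being a unitary in $M(\mathcal{A})$ (Proposition \ref{p surjective OP preserves multipliers JB}) forces $r_t^2=1$. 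Next I would plug the factorisation $T_t(x)=h_t\circ_{r_t}S_t(x)$ into $T_t(x^*)=T_t(x)^*$; using that $h_t=h_t^*$ and $r_t=r_t^*$ imply $(h_t\circ_{r_t}y)^*=h_t\circ_{r_t}y^*$, together with $S_t(x^*)=S_t(x)^{*_{r_t}}$, and cancelling the invertible multiplication operator $M_{h_t}^{r_t}$, I obtain $y^{*_{r_t}}=y^*$ for every $y\in\mathcal{A}$, i.e.\ $U_{r_t}=Id$ on $\mathcal{A}$. Expanding $U_{r_t}(z)=z$ with $r_t^2=1$ and using Jordan commutativity yields $M_{r_t}^2=Id$; a Shirshov--Cohn realisation of the JC$^*$-subalgebra generated by $r_t$ together with an arbitrary $z\in\mathcal{A}$ then gives $r_tzr_t=z$, so $r_t\in Z(M(\mathcal{A}))$.

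Next I would observe that distinct central symmetries in a JB$^*$-algebra sit at norm distance $2$ (their joint Gelfand representation takes values in $\{\pm1\}$), so the continuity of $t\mapsto r_t$ at zero (already embedded in the proof of Theorem \ref{t Wolff one-parameter for OP JBstar}) forces $r_t=1$ on a neighbourhood of $0$; the auxiliary identity $r_{t+s}=S_t^{**}(r_s)$ from that proof, together with $S_t^{**}(1)=r_t$ (since $r_t$ is the unit of $(\mathcal{A}^{**},\circ_{r_t},*_{r_t})$), then makes $r$ locally constant on $\mathbb{R}$, so $r_t\equiv 1$. With $r_t=1$ the twisted products coincide with the original ones, hence $h_t\in Z(M(\mathcal{A}))$ and $S_t$ is a genuine Jordan $^*$-automorphism of $\mathcal{A}$. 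Writing $S_t=e^{t\delta}$ via Lemma \ref{l -parameter group of iso on a JB*-triple}, the condition $\delta(1)=0$ (coming from $S_t(1)=r_t=1$) combined with the decomposition recalled just before Lemma \ref{l Jordan *-derivations vanish on the center} shows that $\delta$ is a Jordan $^*$-derivation, which I denote by $d$. Applying Lemma \ref{l Jordan *-derivations vanish on the center} to the bitranspose $d^{**}$ on $\mathcal{A}^{**}$ (using $h_s\in Z(M(\mathcal{A}))\subseteq Z(\mathcal{A}^{**})$) yields $d^{**}(h_s)=0$, so $S_t^{**}(h_s)=h_s$, and the theorem's identity collapses to $h_{t+s}=h_t\circ h_s$; the identity $r_{t+s}=r_t\circ r_s$ is then trivial.

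The reverse implication $(b)\Rightarrow(a)$ should be immediate from Theorem \ref{t Wolff one-parameter for OP JBstar}, because the hypotheses in (b) here strictly include those in (b) of the theorem. For the moreover statement I would note that $\{h_t\}$ is a uniformly continuous one-parameter group in the commutative unital JB$^*$-algebra $Z(M(\mathcal{A}))$, so standard Banach-algebra theory supplies a self-adjoint $h\in Z(M(\mathcal{A}))$ with $h_t=e^{th}$; centrality of $h$ gives $M_h^n=M_{h^n}$, hence $e^{tM_h}=M_{e^{th}}=M_{h_t}$. Since $d$ annihilates $h$ by Lemma \ref{l Jordan *-derivations vanish on the center}, the operators $M_h$ and $d$ commute, whence $T_t=M_{h_t}S_t=e^{tM_h}e^{td}=e^{t(M_h+d)}$, and $r_t\circ S_t=S_t=e^{td}$ as required. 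The main obstacle will be the passage from the symmetry hypothesis to the triviality of $r_t$: extracting $U_{r_t}=Id$ from symmetry is delicate, and the bridge $U_{r_t}=Id\Rightarrow M_{r_t}^2=Id\Rightarrow r_t$ central, via Shirshov--Cohn, is the crucial technical step.
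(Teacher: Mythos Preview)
Your proposal is correct, and it takes a genuinely different route from the paper's proof. The paper never shows $r_t\equiv 1$; instead it keeps the (a priori nontrivial) $r_t$ throughout and argues as follows: from $r_t^{2}=1$ and the identity $S_t(a)=S_t(a)^{*_{r_t}}$ on $\mathcal{A}_{sa}$ one obtains $\{r_t,r_t,S_t(a)\}=\{r_t,S_t(a),r_t\}$, which via \eqref{eq ideintity for operator commutativity of two hermitian elements} gives $r_t\in Z(M(\mathcal{A}))$; an independent computation with \cite[Corollary 4.1$(a)$]{BurFerGarPe09} yields $h_t\in Z(M(\mathcal{A}))$. The paper then checks directly that $\{r_t\circ S_t\}$ is a one-parameter group of Jordan $^*$-automorphisms of $(\mathcal{A},\circ,*)$, invokes Lemma \ref{l Jordan *-derivations vanish on the center} to see that its generator kills central elements, and only then derives $h_{t+s}=h_t\circ h_s$ and $r_{t+s}=r_t\circ r_s$. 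Your observation that norm continuity of $t\mapsto r_t$ at $0$ together with the discreteness of self-adjoint symmetries in the commutative C$^*$-algebra $Z(M(\mathcal{A}))$ forces $r_t=1$ near $0$, and that $r_{t+s}=S_t^{**}(r_s)=S_t^{**}(1)=r_t$ then makes $r$ locally constant, hence $r_t\equiv 1$, is a real simplification: once $r_t=1$ the twisted structures collapse to the original ones, $h_t\in Z(M(\mathcal{A}))$ is immediate from Corollary \ref{c Characterization bd OP plus bijective Jordan}, $S_t$ is already a Jordan $^*$-automorphism, and $r_t\circ S_t=S_t$, so the ``moreover'' clause becomes transparent.

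Two minor remarks. First, your Shirshov--Cohn detour to get $r_t$ central from $U_{r_t}=Id$ and $r_t^{2}=1$ is correct but unnecessary: since $U_{r_t}(k)=k=r_t^{2}\circ k$ for every self-adjoint $k$, \eqref{eq ideintity for operator commutativity of two hermitian elements} gives operator commutativity of $r_t$ with $k$ directly, without embedding into a C$^*$-algebra. Second, when you write $\delta(1)=0$, note that $1\notin\mathcal{A}$ in general; the clean way is to pass to $\delta^{**}$ on $\mathcal{A}^{**}$ (which you already do for $h_s$), observe $\delta^{**}(1)=\frac{d}{dt}\big|_{t=0}S_t^{**}(1)=0$, and conclude that $\delta^{**}$, hence $\delta$, is a Jordan $^*$-derivation. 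With these cosmetic adjustments your argument goes through and in fact yields the sharper (implicit) fact $r_t\equiv 1$ that the paper's formulation $r_{t+s}=r_t\circ r_s$ leaves unstated.
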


\begin{proof} We shall only prove the extra affirmations in $(a)\Rightarrow (b)$. We begin by observing that the additional hypothesis on $T_{t}$ --i.e. $T_t$ symmetric-- shows that $h_t,r_t\in M(\mathcal{A}_{sa})$. Therefore $r_t$ is a symmetric unitary in $\mathcal{A}^{**}$, and hence 
$r_t^2 = 1$. In particular, $S_t: \mathcal{A}\to \mathcal{A}$ is a symmetric operator too. Namely, we have shown in the comments before Lemma \ref{l kernel is an ideal JBstar} that the mapping $M_{h_t}^t (x) = h_t \circ_{r_t} x$ is invertible in $B(\mathcal{A})$. Then, by the symmetry of $T_t$, $h_t$ and $r_t$, we get $$\begin{aligned}M_{h_t}^t S_t(x^*) &=  h_t \circ_t S_t(x^*)= T_t(x^*) = T_t (x)^* = \left( h_t \circ_t S_t(x) \right)^*\\
&= \{ h_t, r_t, S_t(x)\}^* =  \{ h_t, r_t, S_t(x)^*\} = M_{h_t}^t \left(S_t(x)^*\right),
\end{aligned} $$ and thus $S_t(x)^* = S_t(x)^*$ for all $x\in \mathcal{A}$.
\smallskip

As in the proof of Theorem \ref{t Wolff one-parameter for OP JBstar}$(a)\Rightarrow (b)$, the identity $h_{t+s} =  h_t \circ_{r_t} S_t^{**} (h_s)$ implies that $r_{t+s} =   S_t^{**} (r_s),$ for all $s,t\in \mathbb{R}$ (cf. \eqref{eq 30 for ranges}).\smallskip

Since $S_t : \mathcal{A}\to (\mathcal{A},\circ_{r_t}, *_{r_t})$ is a Jordan $^*$-isomorphism we get $$\begin{aligned}
r_t^2 \circ S_t (a)= \{r_t, r_t,  S_t (a) \} &= S_t (a)  = S_t (a)^{*_{r_t}} = \{r_t,  S_t (a), r_t \} 
\end{aligned}$$ for all $a\in \mathcal{A}_{sa}$, which guarantees that $r_t$ and $S_t(a)$ operator commute for all $a\in \mathcal{A}_{sa}$ (compare \eqref{eq ideintity for operator commutativity of two hermitian elements}). Having in ming that $S_t (\mathcal{A}_{sa}) = \mathcal{A}_{sa}$ we conclude that $r_t$ lies in the center of $M(\mathcal{A})$.\smallskip

Now, by \cite[Corollary 4.1$(a)$]{BurFerGarPe09} for each $a\in \mathcal{A}_{sa}$ we have $$\begin{aligned}h_t^2 \circ T_t(a)  &=\{T_t (a),h_t,h_t\} =\{ h_t, T_t (a), h_t\}, 
\end{aligned} $$ which combined with the surjectivity of $T_t$ and \eqref{eq ideintity for operator commutativity of two hermitian elements}, it suffices to deduce that $h_t$ lies in the center of $M(\mathcal{A})$.\smallskip

We shall next show that $\{r_t \circ S_t\}_{t\in \mathbb{R}}$ is a one-parameter group of Jordan $^*$-isomorphisms on $\mathcal{A}$. Indeed, since $r_t\circ S_t: \mathcal{A}\to \mathcal{A}$ is a Jordan $^*$-isomorphism, $r_s\in Z(\mathcal{A}^{**})$ and $r_{t+s} =   S_t^{**} (r_s)$ we get
$$(r_t\circ S_t) (r_s\circ S_s (a)) = (r_t\circ S_t^{**}) (r_s) \circ (r_t\circ S_t)(S_s(a)) = r_t^2 \circ (S_t^{**} (r_s)\circ S_t S_s(a))$$ $$= S_t^{**} (r_s)\circ S_t S_s(a)= r_{t+s}\circ S_t S_s(a) = (r_{t+s}\circ S_{t+s}) (a), $$ for all  $a\in A$, which proves the desired statement.\smallskip

Therefore $\{(r_t \circ S_t)|_{Z(\mathcal{A})} \}_{t\in \mathbb{R}}$ is a one-parameter group of Jordan $^*$-isomor-phisms on the commutative C$^*$-algebra $Z(\mathcal{A})$, and thus Lemma \ref{l Jordan *-derivations vanish on the center} guarantees that it must the identity constant group.
Finally, since $r_t,h_t\in Z(M(\mathcal{A}))$ we deduce that $$\begin{aligned}
h_{t+s} &= T_{t+s}^{**} (1) = T_{t}^{**} T_{s}^{**} (1)= h_t \circ_{r_t} S_t^{**} (h_s) \\
&=\{ h_t, {r_t},  S_t^{**} (h_s)\} = h_t\circ ({r_t}\circ  S_t^{**} (h_s)) = h_t\circ h_s
\end{aligned}$$ and consequently $r_{t+s} =  r_t\circ r_s,$ for all $t,s\in \mathbb{R}.$\smallskip

We have proved that $\{ r_t ; t\in \mathbb{R}\}$ and $\{ h_t ; t\in \mathbb{R}\}$ are uniformly continuous semigroups in $Z(M(\mathcal{A}))$. Since $Z(M(\mathcal{A}))$ is a unital commutative C$^*$-algebra, we can proceed as in the proof of \cite[Corollary 2]{GarPeUnitCstaralg20} to find $h\in Z(M(\mathcal{A}))$ such that $h_t = e^{t h}$ for all $t\in \mathbb{R}$. Having in mind that $\{r_t \circ S_t\}_{t\in \mathbb{R}}$ is a uniformly continuous one-parameter group of Jordan $^*$-isomorphisms on $\mathcal{A}$, Lemma \ref{l -parameter group of iso on a JB*-triple} and subsequent comments assure the existence of a Jordan $^*$-derivation $d$ on $\mathcal{A}$ such that $r_t\circ S_t = e^{t d},$ and thus $$T_t (a) = e^{t h}\circ (r_t\circ S_t) (a) = e^{t h}\circ e^{t d} (a) = e^{t (M_h+ d)} (a),$$ for all $a\in A$, $t\in \mathbb{R}$, where in the last equality we applied that $h \in Z(M(\mathcal{A}))$ and Lemma \ref{l Jordan *-derivations vanish on the center}.
\end{proof}

\textbf{Acknowledgements} A.M. Peralta partially supported by the Spanish Ministry of Science, Innovation and Universities (MICINN) and European Regional Development Fund project no. PGC2018-093332-B-I00, Junta de Andaluc\'{\i}a grant FQM375 and Proyecto de I+D+i del Programa Operativo FEDER Andalucia 2014-2020, ref. A-FQM-242-UGR18. \smallskip

%
\section*{Conflict of interest}
The authors declare that they have no conflict of interest.



\medskip\medskip

\end{document}